\definecolor{dunkelgrau}{rgb}{0.8,0.8,0.8}
\definecolor{hellgrau}{rgb}{0.9,0.9,0.9}
\newcommand{\R}{\ensuremath{\mathbb{R}}}
\newcommand{\oR}{\ensuremath{\overline{\mathbb{R}}}}
\renewcommand{\>}{\right\rangle}
\newcommand{\<}{\left\langle}
\newcommand{\id}{\ensuremath{\text{Id}}}
\newcommand{\bx}{\ensuremath{\overline{x}}}
\newcommand{\by}{\ensuremath{\overline{y}}}
\newcommand{\bv}{\ensuremath{\overline{v}}}
\newcommand{\bp}{\ensuremath{\overline{p}}}
\newcommand{\bu}{\ensuremath{\overline{u}}}
\renewcommand{\bv}{\ensuremath{\overline{v}}}
\newcommand{\f}{\ensuremath{\boldsymbol}}
\newcommand{\fx}{\ensuremath{\boldsymbol{x}}}
\newcommand{\fv}{\ensuremath{\boldsymbol{v}}}
\newcommand{\fbv}{\ensuremath{\boldsymbol{\overline{v}}}}
\newcommand{\fbx}{\ensuremath{\boldsymbol{\overline{x}}}}
\newcommand{\fby}{\ensuremath{\boldsymbol{\overline{y}}}}
\newcommand{\fp}{\ensuremath{\boldsymbol{p}}}
\newcommand{\fq}{\ensuremath{\boldsymbol{q}}}
\newcommand{\fy}{\ensuremath{\boldsymbol{y}}}
\newcommand{\fz}{\ensuremath{\boldsymbol{z}}}
\newcommand{\fw}{\ensuremath{\boldsymbol{w}}}
\newcommand{\fG}{\ensuremath{\boldsymbol{\mathcal{G}}}}
\newcommand{\fK}{\ensuremath{\boldsymbol{\mathcal{K}}}}
\newcommand{\g}{\ensuremath{\mathcal{G}}}
\newcommand{\h}{\ensuremath{\mathcal{H}}}
\newcommand{\Y}{\ensuremath{\mathcal{Y}}}
\newcommand{\proj}{\ensuremath{\mathcal{P}}}
\renewcommand{\Box}{\ensuremath{\mbox{\small$\,\square\,$}}}
\theoremstyle{plain}
\newtheorem{theorem}{Theorem}[section]
\theoremstyle{definition}
\newtheorem{remark}{Remark}[section]
\newtheorem{example}{Example}[section]
\newtheorem{problem}{Problem}[section]
\newtheorem{algorithm}{Algorithm}[section]
\DeclareMathOperator*\dom{dom}%
\DeclareMathOperator*\gra{gra}%
\DeclareMathOperator*\argmin{arg\,min}%
\DeclareMathOperator*\cl{cl}%
\DeclareMathOperator*\ran{ran}%
\DeclareMathOperator*\zer{zer}%
\DeclareMathOperator*\Fix{fix}%
\DeclareMathOperator*\Prox{Prox}%
\numberwithin{equation}{section}  
\title{A Douglas-Rachford type primal-dual method for solving inclusions with mixtures of composite and parallel-sum type monotone operators}
\author{Radu Ioan Bo\c t
\thanks {Department of Mathematics, Chemnitz University of Technology, D-09107 Chemnitz, Germany, e-mail: radu.bot@mathematik.tu-chemnitz.de. Research partially supported by DFG (German Research Foundation), project BO 2516/4-1.}
\and Christopher Hendrich
\thanks{Department of Mathematics, Chemnitz University of Technology, D-09107 Chemnitz, Germany, e-mail: christopher.hendrich@mathematik.tu-chemnitz.de. Research supported by a Graduate Fellowship of the Free State Saxony, Germany.}
}
\date{\today}
\begin{document}
\maketitle

{\bf Abstract.} In this paper we propose two different primal-dual splitting algorithms for solving inclusions involving mixtures of composite and parallel-sum type monotone operators which rely on an inexact Douglas-Rachford splitting method, however applied in different underlying Hilbert spaces. Most importantly, the algorithms allow to process the bounded linear operators and the set-valued operators occurring in the formulation of the monotone inclusion problem separately at each iteration, the latter being individually accessed via their resolvents. The performances of the primal-dual algorithms are emphasized via some numerical experiments on location and image deblurring problems.

{\bf Keywords.} Douglas-Rachford splitting, monotone inclusion, Fenchel duality, convex optimization

{\bf AMS subject classification.} 90C25, 90C46, 47A52

\section{Introduction and preliminaries}\label{sectionIntro}

In applied mathematics, a wide range of convex optimization problems such as single- or multifacility location problems, support vector machine problems for classification and regression, portfolio optimization problems as well as signal and image processing problems, all of them likely possessing nondifferentiable convex objectives, can be reduced to the solving of inclusions involving mixtures of monotone set-valued operators.

In this article we propose two different primal-dual iterative error-tolerant methods for solving inclusions with mixtures of composite and parallel-sum type monotone operators. Both algorithms rely on the inexact Douglas-Rachford algorithm (cf. \cite{Com04,Com09}), but still differ clearly from each other. An important feature of the two approaches and, simultaneously, an advantage over many existing methods is their capability of processing the set-valued operators separately via their resolvents, while the bounded linear operators are accessed via explicit forward steps on their own or on their adjoints. The resolvents of the maximally monotone operators are not always available in closed form expressions, fact which motivates the inexact versions of the algorithms, where implementation errors in the shape of summable sequences are allowed.

The methods in this article are also perfectly parallelizable since the majority of their steps can be executed independently. Furthermore, when applied to subdifferential operators of proper, convex and lower semicontinuous functions, the solving of the monotone inclusion problems is, under appropriate qualification conditions (cf. \cite{Bot10,BGW09}), equivalent with finding optimal solutions to primal-dual pairs of convex optimization problems. The considered formulation also captures various different types of primal convex optimization problems and corresponding conjugate duals appearing in wide ranges of applications. The resolvents of subdifferentials of proper, convex and lower semicontinuous functions are the proximal point mappings of these and are known to take in a lot of situations shape of closed form expressions.

Recent research (see \cite{BotCseHein12, BotHend12, BriCom11,ComPes12,Vu11}) has shown that structured problems dealing with monotone inclusions can be efficiently solved via primal-dual splitting approaches. In \cite{ComPes12}, the problem involving sums of set-valued, composed, Lipschitzian and parallel-sum type monotone operators was decomposed and solved via an inexact Tseng algorithm having foward-backward-forward characteristics in a product Hilbert space. On the other hand, in \cite{Vu11}, instead of Lipschitzian operators, the author has assumed cocoercive operators and solved the resulting problem with an inexact forward-backward algorithm. Thus, our methods can be seen as a continuation of these ideas, this time by making use of the inexact Douglas-Rachford method. Another primal-dual method relying on the same fundamental splitting algorithm is considered in \cite{Con12} in the context of solving minimization problems having as objective the sum of two proper, convex and lower semicontinuous functions, one of them being composed with a bounded linear operator.

Due to the nature of Douglas-Rachford splitting, we will neither assume Lipschitz continuity nor cocoercivity for any of the operators present in the formulation of the monotone inclusion problem. The resulting drawback of not having operators which can be processed explicitly via forward steps is compensated by the advantage of allowing general maximal monotone operators in the parallel-sums, fact which relaxes the working hypotheses in \cite{ComPes12,Vu11}.

The article is organized as follows. In the remaining of this section we introduce the framework we work within and some necessary notations. The splitting algorithms and corresponding weak and strong convergence statements are subject of Section \ref{sectionInc} while Section \ref{sectionMin} is concerned with the application of the two methods to convex minimization problems. Finally, in Section \ref{sectionEx} we make some numerical experiments and evaluate the obtained results.

We are considering the real Hilbert spaces $\h$ and $\g_i$ endowed with the \textit{inner product} $\left\langle \cdot ,\cdot \right\rangle_{\h}$ and $\left\langle \cdot ,\cdot \right\rangle_{\g_i}$ and associated \textit{norm} $\left\| \cdot \right\|_{\h} = \sqrt{\left\langle \cdot, \cdot \right\rangle_{\h}}$ and $\left\| \cdot \right\|_{\g_i} = \sqrt{\left\langle \cdot, \cdot \right\rangle_{\g_i}}, i=1,\ldots,m$, respectively. The symbols $\rightharpoonup$ and $\rightarrow$ denote weak and strong convergence, respectively, $\R_{++}$ denotes the set of strictly positive real numbers and $\R_+ = \R_{++} \cup \{0\}$. By $B(0,r)$ we denote the closed ball with center $0$ and radius $r \in \R_{++}$. For a function $f: \h \rightarrow \oR = \R \cup \{\pm \infty\}$ we denote by $\dom f := \left\{ x \in \h : f(x) < +\infty \right\}$ its \textit{effective domain} and call $f$ \textit{proper} if $\dom f \neq \varnothing$ and $f(x)>-\infty$ for all $x \in \h$. Let be
$$\Gamma(\h) := \{f: \h \rightarrow \overline \R: f \ \mbox{is proper, convex and lower semicontinuous}\}.$$
The \textit{conjugate function} of $f$ is $f^*:\h \rightarrow \oR$, $f^*(p)=\sup{\left\{ \left\langle p,x \right\rangle -f(x) : x\in\h \right\}}$ for all $p \in \h$ and, if $f \in \Gamma(\h)$, then $f^* \in \Gamma(\h)$, as well. The \textit{(convex) subdifferential} of $f: \h \rightarrow \oR$ at $x \in \h$ is the set $\partial f(x) = \{p \in \h : f(y) - f(x) \geq \left\langle p,y-x \right\rangle \ \forall y \in \h\}$, if $f(x) \in \R$, and is taken to be the empty set, otherwise. For a linear continuous operator $L_i: \h \rightarrow \g_i$, the operator $L_i^*: \g_i \rightarrow \h$, defined via $\< L_ix,y  \> = \< x,L_i^*y  \>$ for all $x \in \h$ and all $y \in \g_i$, denotes its \textit{adjoint}, for $i\in \{1,\ldots,m\}$.

Having two functions $f,\,g : \h \rightarrow \oR$, their \textit{infimal convolution} is defined by $f \Box g : \h \rightarrow \oR$, $(f \Box g) (x) = \inf_{y \in \h}\left\{ f(y) + g(x-y) \right\}$ for all $x \in \h$, being a convex function when $f$ and $g$ are convex.

Let $M:\h \rightarrow 2^{\h}$ be a set-valued operator. We denote by $\zer M = \{ x \in \h : 0 \in M x \}$ its set of \textit{zeros}, by $\Fix M = \{x \in \h: x \in Mx\}$ its set of \textit{fixed points}, by $\gra M = \{ (x,u) \in \h \times \h : u \in Mx\}$ its \textit{graph} and by $\ran M =\{u \in \h : \exists x \in \h,\ u\in Mx\}$ its \textit{range}. The \textit{inverse} of $M$ is $M^{-1}:\h \rightarrow 2^{\h}$, $u \mapsto \{ x\in\h : u \in Mx \}$. We say that the operator $M$ is \textit{monotone} if $\< x-y,u-v \> \geq 0$ for all $(x,u),\,(y,v) \in \gra M$ and it is said to be \textit{maximally monotone} if there exists no monotone operator $M':\h \rightarrow 2^{\h}$ such that $\gra M'$ properly contains $\gra M$. The operator $M$ is said to be \textit{uniformly monotone} with modulus $\phi_M : \R_{+} \rightarrow [0,+\infty]$ if $\phi_M$ is increasing, vanishes only at $0$, and $\< x-y,u-v \> \geq \phi_M \left( \| x-y \|\right)$ for all $(x,u),\,(y,v) \in \gra M$.

The \textit{resolvent} and the \textit{reflected resolvent} of an operator  $M:\h \rightarrow 2^{\h}$ are
$$ J_M = \left( \id + M \right)^{-1} \text{ and }R_M = 2J_M - \id,$$
respectively, the operator $\id$ denoting the identity on the underlying Hilbert space. When $M$ is maximally monotone, its resolvent (and, consequently, its reflected resolvent) is a single-valued operator and, by \cite[Proposition 23.18]{BauCom11}, we have for $\gamma \in \R_{++}$
\begin{align}
	\label{res-identity}
	\id = J_{\gamma M} + \gamma J_{\gamma^{-1}M^{-1}}\circ \gamma^{-1} \id.
\end{align}
Moreover, for $f \in \Gamma(\h)$ and $\gamma \in \R_{++}$ the subdifferential $\partial (\gamma f)$ is maximally monotone (cf. \cite[Theorem 3.2.8]{Zalinescu02}) and it holds $J_{\gamma \partial f} = \left(\id + \gamma \partial f \right)^{-1} = \Prox_{\gamma f}$. Here, $\Prox_{\gamma f}(x)$ denotes the \textit{proximal point} of $\gamma f$ at $x\in\h$ representing the unique optimal solution of the optimization problem
\begin{align}\label{prox-def}
\inf_{y\in \h}\left \{\gamma f(y)+\frac{1}{2}\|y-x\|^2\right\}.
\end{align}
In this particular situation \eqref{res-identity} becomes \textit{Moreau's decomposition formula}
\begin{align}
	\label{res-indentity}
	\id = \Prox\nolimits_{\gamma f} + \gamma \Prox\nolimits_{\gamma^{-1}f^*} \circ \gamma^{-1}\id.
\end{align}
When $\Omega \subseteq \h$ is a nonempty, convex and closed set, the function $\delta_\Omega : \h \rightarrow \overline \R$, defined by $\delta_\Omega(x) = 0$ for $x \in \Omega$ and $\delta_\Omega(x) = +\infty$, otherwise, denotes the \textit{indicator function} of the set $\Omega$. For each $\gamma > 0$ the proximal point of $\gamma \delta_\Omega$ at $x \in \h$ is nothing else than
$$\Prox\nolimits_{\gamma \delta_\Omega}(x) = \Prox\nolimits_{\delta_\Omega}(x) = \proj_\Omega(x) = \argmin_{y \in \Omega} \frac{1}{2}\|y-x\|^2,$$
which is the projection of $x$ on $\Omega$.

The \textit{sum} and the \textit{parallel sum} of two set-valued operators $M_1,\,M_2:\h \rightarrow 2^{\h}$ are defined as
$M_1 + M_2:\h \rightarrow 2^{\h}, (M_1 + M_2)(x) = M_1(x) + M_2(x) \ \forall x \in \h$
and
$$M_1 \Box M_2:\h \rightarrow 2^{\h}, M_1 \Box M_2  = \left(M_1^{-1} + M_2^{-1}\right)^{-1},$$
respectively. If $M_1$ and $M_2$ are monotone, than $M_1 + M_2$ and $M_1 \Box M_2$ are monotone, too, however, if $M_1$ and $M_2$ are maximally monotone, this property is in general not true neither for $M_1 + M_2$ nor for $M_1 \Box M_2$ (see \cite{Bot10}).

\section{Algorithms and convergence results}\label{sectionInc}
Within this section we provide two algorithms together with weak and strong convergence results for the following primal-dual pair of  monotone inclusion problems.
\begin{problem}\label{p1}
Let $A:\h \rightarrow 2^{\h}$ be a maximally monotone operator and $z \in \h$. Furthermore, for every $i \in \{1,\ldots,m\}$, let $r_i \in \g_i$, $B_i : \g_i \rightarrow 2^{\g_i}$ and $D_i : \g_i \rightarrow 2^{\g_i}$ be maximally monotone operators and $L_i : \h \rightarrow \g_i$ a nonzero bounded linear operator. The problem is to solve the primal inclusion
\begin{align}
	\label{opt-p}
	\text{find }\bx \in \h \text{ such that } z \in A\bx + \sum_{i=1}^m L_i^* (B_i\Box D_i)(L_i \bx-r_i)
\end{align}
together with the dual inclusion
\begin{align}
	\label{opt-d}
	\text{find }\bv_1 \in \g_1,\ldots,\bv_m \in \g_m \text{ such that }(\exists x\in\h)\left\{
	\begin{array}{l}
		z - \sum_{i=1}^m L_i^*\bv_i \in Ax \\
		\!\!\bv_i \in \!\! (B_i \Box D_i)(L_ix-r_i), \,i=1,\ldots,m.
	\end{array}
\right.
\end{align}
\end{problem}

We say that $(\bx, \bv_1,\ldots,\bv_m) \in \h \times \g_1 \ldots \times \g_m$ is a primal-dual solution to Problem \ref{p1}, if
\begin{equation}\label{operator-proof-conditions-full}
z - \sum_{i=1}^m L_i^*\bv_i \in A\bx \ \mbox{and} \  \bv_i \in (B_i \Box D_i)(L_i\bx-r_i), \,i=1,\ldots,m.
\end{equation}
If $(\bx, \bv_1,\ldots,\bv_m) \in \h \times \g_1 \ldots \times \g_m$ is a primal-dual solution to Problem \ref{p1}, then $\bx$ is a solution to \eqref{opt-p} and $(\bv_1,\ldots,\bv_m)$ is a solution to \eqref{opt-d}. Notice also that
\begin{align*}
\bx \text{ solves }\eqref{opt-p} & \Leftrightarrow z - \sum_{i=1}^m L_i^* (B_i\Box D_i)(L_i \bx-r_i)  \in A\bx \Leftrightarrow\\
\exists\, \bv_1\in\g_1,\ldots,\bv_m\in\g_m \ & \mbox{such that} \ \left\{
		\begin{array}{l} z - \sum_{i=1}^m L_i^*\bv_i  \in A\bx, \\ \bv_i \in  (B_i\Box D_i)(L_i \bx-r_i) , \ i=1,\ldots,m.  \end{array}\right.
\end{align*}
Thus, if $\bx$ is a solution to \eqref{opt-p}, then there exists $(\bv_1,\ldots,\bv_m) \in \g_1 \times \ldots \g_m$ such that $(\bx, \bv_1,\ldots,\bv_m)$ is a primal-dual solution to Problem \ref{p1} and if $(\bv_1,\ldots,\bv_m) \in \g_1 \times \ldots \g_m$ is a solution to \eqref{opt-d}, then there exists $\bx \in \h$ such that $(\bx, \bv_1,\ldots,\bv_m)$ is a primal-dual solution to Problem \ref{p1}.

\begin{example}\label{ex1}
In Problem \ref{p1}, set $m=1$, $z=0$ and $r_1=0$, let $\g_1=\g$, $B_1=B$, $L_1=L$, $D_1 : \g \rightarrow 2^{\g}$, $D_1(0) = \g$ and $D_1(v) = \varnothing \ \forall v \in \g \setminus \{0\}$, and $A:\h \rightarrow 2^{\h} $ and $B:\g \rightarrow 2^{\g}$ be the convex subdifferentials of the functions $f \in \Gamma(\h)$ and $g\in\Gamma(\g)$, respectively. Then, under appropriate qualification conditions (see \cite{Bot10,BGW09}), to solve the primal inclusion problem \eqref{opt-p} is equivalent to solve the optimization problem
\begin{align*}
		 \inf_{x\in\h} \left\{f(x) + g(Lx)\right\},
\end{align*}		
while to solve the dual inclusion problem \eqref{opt-d} is nothing else than to solve its Fenchel-dual problem
\begin{align*}
		\sup_{v \in \g}\left\{ - f^*(-L^*v) - g^*(v)\right\}.
\end{align*}
\end{example}
For more primal-dual pairs of convex optimization problems which are particular instances of \eqref{opt-p}-\eqref{opt-d} we refer to \cite{ComPes12,Vu11}.

\subsection{A first primal-dual algorithm}\label{subsectionInc1}
The first iterative scheme we propose in this paper and which we describe as follows has the particularity that it accesses the resolvents of $A$, $B_i^{-1}$ and $D_i^{-1}, i=1,...,m,$ and processes each operator $L_i$ and its adjoint $L_i^*, i=1,...,m$ two times.

\begin{algorithm}\label{alg1} \text{ }\newline
Let $x_0 \in \h$, $(v_{1,0}, \ldots, v_{m,0}) \in \g_1 \times \ldots \times \g_m$ and $\tau$ and $\sigma_i, i=1,...,m,$ be strictly positive real numbers such that
$$\tau \sum_{i=1}^m \sigma_i \|L_i\|^2 < 4. $$
Furthermore, let $(\lambda_n)_{n\geq 0}$ be a sequence in $(0,2)$, $(a_n)_{n\geq 0}$ a sequence in $\h$, $(b_{i,n})_{n\geq 0}$ and $(d_{i,n})_{n\geq 0}$ sequences in $\g_i$ for all $i=1,\ldots,m$ and set
	\begin{align}\label{A1}
	  \left(\forall n\geq 0\right) \begin{array}{l} \left\lfloor \begin{array}{l}
		p_{1,n} = J_{\tau A}\left( x_n - \frac{\tau}{2} \sum_{i=1}^m L_i^* v_{i,n} + \tau z \right) + a_n \\
		w_{1,n} = 2p_{1,n} - x_n \\
		\text{For }i=1,\ldots,m  \\
				\left\lfloor \begin{array}{l}
					p_{2,i,n} = J_{\sigma_i B_i^{-1}}\left(v_{i,n} +\frac{\sigma_i}{2} L_i w_{1,n} - \sigma_i r_i \right) + b_{i,n} \\
					w_{2,i,n} = 2 p_{2,i,n} - v_{i,n}
				\end{array} \right.\\
		z_{1,n} = w_{1,n} - \frac{\tau}{2} \sum_{i=1}^m L_i^* w_{2,i,n} \\
		x_{n+1} = x_n + \lambda_n ( z_{1,n} - p_{1,n} ) \\
		\text{For }i=1,\ldots,m  \\
				\left\lfloor \begin{array}{l}
					z_{2,i,n} = J_{\sigma_i D_i^{-1}}\left(w_{2,i,n} + \frac{\sigma_i}{2}L_i (2 z_{1,n} - w_{1,n}) \right) + d_{i,n} \\
					v_{i,n+1} = v_{i,n} + \lambda_n (z_{2,i,n} - p_{2,i,n}).
				\end{array} \right. \\ \vspace{-4mm}
		\end{array}
		\right.
		\end{array}
	\end{align}
\end{algorithm}

\begin{theorem}\label{thm1}
For Problem \ref{p1} assume that
\begin{align}\label{zin}
	z \in \ran \left( A +  \sum_{i=1}^m L_i^*(B_i\Box D_i)(L_i \cdot -r_i) \right)
\end{align}
and consider the sequences generated by Algorithm \ref{alg1}.
\begin{enumerate}[label={(\roman*)}]
	\setlength{\itemsep}{-2pt}
		\item  \label{th1.01} If
$$\sum_{n=0}^{+\infty}\lambda_n\|a_n\|_{\h} < +\infty, \quad \sum_{n=0}^{+\infty}\lambda_n (\|d_{i,n}\|_{\g_i} + \|b_{i,n}\|_{\g_i} ) < + \infty, i=1,\ldots,m,$$ and
$\sum_{n=0}^{+\infty} \lambda_n (2 - \lambda_n) = +\infty,$
then
\begin{enumerate}[label={(\alph*)}]
	\setlength{\itemsep}{-2pt}
		\item  \label{th1.1}$(x_n,v_{1,n},\ldots,v_{m,n})_{n\geq 0}$ converges weakly to a point $(\bx,\bv_1,\ldots,\bv_m)\in\h\times\g_1\times\ldots\times\g_m$ such that, when setting
		\begin{align*}
						\bp_1 &= J_{\tau A}\left(\bx - \frac{\tau}{2} \sum_{i=1}^m L_i^*\bv_i + \tau z \right), \\
						\text{and }\bp_{2,i} & =J_{\sigma_i B_i^{-1}}\left(\bv_{i} +\frac{\sigma_i}{2} L_i (2\bp_1-\bx) -\sigma_i r_i \right), \ i =1,...,m,
\end{align*}
the element $(\bp_1, \bp_{2,1},\ldots,\bp_{2,m})$ is a primal-dual solution to Problem \ref{p1}.
		\item  \label{th1.2}$\lambda_n (z_{1,n}-p_{1,n}) \rightarrow 0 \  (n \rightarrow +\infty)$ and  $\lambda_n(z_{2,i,n}-p_{2,i,n}) \rightarrow  0 \  (n \rightarrow +\infty)$ for $i=1,...,m$.
		\item \label{th1.3} whenever $\h$ and $\g_i$, $i=1,...,m,$ are finite-dimensional Hilbert spaces, $a_n \rightarrow 0 \  (n \rightarrow +\infty)$ and $b_{i,n} \rightarrow 0 \  (n \rightarrow +\infty)$ for $i=1,...,m$, then $(p_{1,n}, p_{2,1,n},\ldots,p_{2,m,n})_{n \geq 0}$ converges strongly to a primal-dual solution of Problem \ref{p1}.
		\end{enumerate}
\item  \label{th1.02} If
$$\sum_{n=0}^{+\infty}\|a_n\|_{\h} < +\infty, \quad \sum_{n=0}^{+\infty} (\|d_{i,n}\|_{\g_i} + \|b_{i,n}\|_{g_i} ) < + \infty, i=1,\ldots,m, \quad \inf_{n\geq 0} \lambda_n > 0$$
$$\mbox{and} \ A \ \mbox{and} \ B_i^{-1}, i=1,...,m, \ \mbox{are uniformly monotone},$$
then $(p_{1,n}, p_{2,1,n},\ldots,p_{2,m,n})_{n \geq 0}$ converges strongly to the unique primal-dual solution of Problem \ref{p1}.
	\end{enumerate}
\end{theorem}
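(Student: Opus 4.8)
The plan is to recast the primal-dual pair of Problem \ref{p1} as a single two-operator monotone inclusion in a renormed product Hilbert space and to identify the iteration \eqref{A1} with the inexact Douglas-Rachford scheme of \cite{Com04,Com09}. Set $\fK=\h\times\g_1\times\cdots\times\g_m$, write $\fw=(\bx,\bv_1,\ldots,\bv_m)$, let $S\fw=\bigl(\sum_{i=1}^m L_i^*\bv_i,\,-L_1\bx,\ldots,-L_m\bx\bigr)$ be the bounded skew-adjoint coupling operator, and introduce
\begin{align*}
M_1\fw &= \bigl(A\bx-z,\ B_1^{-1}\bv_1+r_1,\ldots,B_m^{-1}\bv_m+r_m\bigr)+\tfrac12 S\fw,\\
M_2\fw &= \bigl(0,\ D_1^{-1}\bv_1,\ldots,D_m^{-1}\bv_m\bigr)+\tfrac12 S\fw.
\end{align*}
A direct reading of $0\in(M_1+M_2)\fw$ recovers precisely the conditions \eqref{operator-proof-conditions-full}, so $\zer(M_1+M_2)$ equals the set of primal-dual solutions; assumption \eqref{zin} together with the equivalences recorded right after Problem \ref{p1} then guarantees $\zer(M_1+M_2)\neq\varnothing$. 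I equip $\fK$ with the inner product $\<\fw,\fw'\>_V=\<V\fw,\fw'\>$ induced by the self-adjoint block operator
\begin{align*}
V=\begin{pmatrix} \tfrac1\tau\id & -\tfrac12 L_1^* & \cdots & -\tfrac12 L_m^*\\ -\tfrac12 L_1 & \tfrac1{\sigma_1}\id & &\\ \vdots & & \ddots &\\ -\tfrac12 L_m & & & \tfrac1{\sigma_m}\id\end{pmatrix},
\end{align*}
which is strongly positive exactly because $\tau\sum_{i=1}^m\sigma_i\|L_i\|^2<4$, via a Schur-complement estimate on the dual blocks.

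Next I would verify that $M_1$ and $M_2$ are maximally monotone on $(\fK,\<\cdot,\cdot\>_V)$: each is the sum of a maximally monotone diagonal operator and the full-domain continuous monotone operator $\tfrac12 S$, and renorming by a strongly positive self-adjoint $V$ preserves maximal monotonicity. The decisive computation is that the $V$-resolvents $J^V_{M_1}=(\id+V^{-1}M_1)^{-1}$ and $J^V_{M_2}=(\id+V^{-1}M_2)^{-1}$ decouple into exactly the Gauss-Seidel sweeps of \eqref{A1}. Writing out $V(\fw-\fp)\in M_1\fp$, the half of $S$ placed in $M_1$ cancels the new-iterate dual coupling produced by the off-diagonal blocks of $V$ in the primal inclusion—leaving $p_{1,n}=J_{\tau A}(x_n-\tfrac\tau2\sum_i L_i^*v_{i,n}+\tau z)$ depending on $x_n$ and the $v_{i,n}$ alone—while reinforcing the primal coupling in the dual inclusions, so that $p_{2,i,n}=J_{\sigma_i B_i^{-1}}(\cdot)$ is then computed from $w_{1,n}=2p_{1,n}-x_n$. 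The identical computation for $M_2$ returns the explicit primal step $z_{1,n}=w_{1,n}-\tfrac\tau2\sum_i L_i^*w_{2,i,n}$ and the dual resolvents $z_{2,i,n}=J_{\sigma_i D_i^{-1}}(\cdot)$. Hence \eqref{A1} is the inexact Douglas-Rachford iteration $\fp_n\approx J^V_{M_1}\fw_n$, $\fq_n\approx J^V_{M_2}(2\fp_n-\fw_n)$, $\fw_{n+1}=\fw_n+\lambda_n(\fq_n-\fp_n)$ with $\fw_n=(x_n,v_{1,n},\ldots,v_{m,n})$, the implementation errors $a_n,b_{i,n},d_{i,n}$ assembling into the two error sequences of that scheme.

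With this identification, \ref{th1.1} and \ref{th1.2} follow from the convergence theory of the inexact Douglas-Rachford algorithm: under $\lambda_n\in(0,2)$, $\sum_n\lambda_n(2-\lambda_n)=+\infty$ and errors that are summable after multiplication by $\lambda_n$, the sequence $\fw_n$ converges weakly to a fixed point $\fw^*$ of the governing operator, $\fw_{n+1}-\fw_n=\lambda_n(\fq_n-\fp_n)\to0$, and the shadow $J^V_{M_1}\fw^*$ belongs to $\zer(M_1+M_2)$; evaluating the resolvent formula at $(\bx,\bv_1,\ldots,\bv_m)=\fw^*$ reproduces exactly the announced $(\bp_1,\bp_{2,1},\ldots,\bp_{2,m})$. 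Part \ref{th1.3} is then immediate in finite dimensions, where weak convergence is strong and $J^V_{M_1}$ is Lipschitz continuous, while $a_n,b_{i,n}\to0$ absorbs the gap between the computed $(p_{1,n},p_{2,i,n})$ and $J^V_{M_1}\fw_n$.

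For the strong convergence statement \ref{th1.02}, uniform monotonicity of $A$ and of $B_1^{-1},\ldots,B_m^{-1}$ makes the diagonal part of $M_1$ uniformly monotone, so $M_1$ (hence $V^{-1}M_1$ in the $V$-metric, the norms being equivalent) is uniformly monotone with a modulus controlling both the primal and all dual components; pairing two zeros of $M_1+M_2$ against one another shows, in particular, that the primal-dual solution is unique. The plan is to feed this modulus into the Fejér-type estimate underlying the Douglas-Rachford analysis, where the firm nonexpansiveness of $J^V_{M_1}$ combined with the uniform-monotonicity inequality forces $\phi(\|\fp_n-\fp^*\|)\to0$ and thus $\fp_n=(p_{1,n},p_{2,i,n})\to\fp^*$ strongly. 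I expect the principal obstacles to be twofold: first, pinning down the metric $V$ and the balanced splitting $\tfrac12 S+\tfrac12 S$ so that the two $V$-resolvents reproduce the sequential structure of \eqref{A1} exactly, together with the positivity of $V$ under $\tau\sum_i\sigma_i\|L_i\|^2<4$; and second, the careful bookkeeping showing that the errors $a_n,b_{i,n},d_{i,n}$, which propagate through the Gauss-Seidel sweeps via the bounded operators $L_i$, still form admissible summable perturbations for the inexact Douglas-Rachford theorem, and, for \ref{th1.02}, extracting the uniform-monotonicity estimate inside that convergence argument.
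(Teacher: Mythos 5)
Your proposal is correct and follows essentially the same route as the paper's own proof: the same product space $\h\times\g_1\times\cdots\times\g_m$, the same splitting in which half of the skew coupling operator $\f S$ is attached to the $(A,B_i^{-1})$-block and half to the $(D_i^{-1})$-block, the same self-adjoint strongly positive metric $\f V$ turning \eqref{A1} into an inexact Douglas--Rachford iteration, and the same appeal to Combettes' convergence theorem (including the uniform-monotonicity transfer to the renormed operator for part \ref{th1.02}). The only cosmetic differences are your Schur-complement view of the positivity of $\f V$ (the paper uses the elementary inequality $2ab\leq\alpha a^2+b^2/\alpha$) and your explicit remark that $\zer(\f M+\f S+\f Q)$ coincides with the primal-dual solution set, where the paper invokes only the implication it needs.
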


\begin{proof}
Consider the Hilbert space $\fG = \g_1 \times \ldots \times \g_m$ endowed with inner product and associated norm defined, for  $\fv=(v_1,\ldots,v_m)$, $\fq=(q_1,\ldots,q_m) \in \fG$, as
\begin{align}
	\label{def1.01}
	\< \fv,\fq \>_{\fG} = \sum_{i=1}^m \< v_i,q_i \>_{\g_i}  \text{ and } \|\fv\|_{\fG} = \sqrt{\sum_{i=1}^m \| v_i \|_{\g_i}^2},
\end{align}
respectively. Furthermore, consider the Hilbert space $\fK = \h \times \fG$ endowed with inner product and associated norm defined, for $(x,\fv),(y,\fq) \in \fK$, as
\begin{align}
	\< (x,\fv),(y,\fq) \>_{\fK} = \< x,y \>_{\h} + \< \fv,\fq \>_{\fG}  \text{ and } \|(x,\fv)\|_{\fK} = \sqrt{ \| x \|_{\h}^2 + \| \fv \|_{\fG}^2},
\end{align}
respectively. Consider the set-valued operator
\begin{align*}
	\f M : \fK \rightarrow 2^{\fK}, \quad (x,v_1,\ldots,v_m) \mapsto (-z + Ax, r_1 + B_1^{-1}v_1, \ldots, r_m + B_m^{-1}v_m),
\end{align*}
which is maximally monotone,  since $A$ and $B_i, i=1,...,m,$ are maximally monotone (cf. \cite[Proposition 20.22 and Proposition 20.23]{BauCom11}) and the bounded linear operator
\begin{align*}
	\f S : \fK \rightarrow \fK, \quad (x,v_1,\ldots,v_m) &\mapsto \left(\sum_{i=1}^m L_i^* v_i, -L_1 x , \ldots, - L_m x\right),
\end{align*}
which proves to be skew (i.\,e. $\f S^*=-\f S$) and hence maximally monotone (cf. \cite[Example 20.30]{BauCom11}). Further, consider the set-valued operator
\begin{align*}
	\f Q : \fK \rightarrow 2^{\fK}, \quad (x,v_1,\ldots,v_m) \mapsto \left(0, D_1^{-1} v_1, \ldots, D_m^{-1} v_m\right),
\end{align*}
which is maximally monotone, as well, since $D_i$ is maximally monotone for $i=1,...,m$. Therefore, since $\dom \f S = \fK$, both $\frac{1}{2}\f S+ \f Q$ and $\frac{1}{2}\f S + \f M$ are maximally monotone (cf. \cite[Corollary 24.4(i)]{BauCom11}). On the other hand, according to \cite[Eq. (3.12)]{ComPes12}, it holds \eqref{zin} $\Leftrightarrow \,\zer\left(\f M + \f S + \f Q\right) \neq \varnothing$, while \cite[Eq. (3.21) and (3.22)]{ComPes12} yield
\begin{align}\label{optcond}
	(x,v_1,\ldots,v_m) \in \zer\left(\f M + \f S + \f Q\right) \Rightarrow & (x, v_1,\ldots,v_m) \text{ is a primal-dual}\notag \\
& \mbox{solution to Problem } \ref{p1}.
\end{align}
Finally, we introduce the bounded linear operator
\begin{align*}
	\f V : \fK &\rightarrow \fK, \quad (x,v_1,\ldots,v_m) &\mapsto \left(\frac{x}{\tau} -\frac{1}{2} \sum_{i=1}^m L_i^* v_i, \frac{v_1}{\sigma_1} - \frac{1}{2} L_1x , \ldots, \frac{v_m}{\sigma_m} -\frac{1}{2} L_m x\right).
\end{align*}
It is a simple calculation to prove that $\f V$ is self-adjoint, i.\,e. $\f V^* = \f V$. Furthermore, the operator $\f V$ is $\rho$-strongly positive for
$$\rho = \left(1-\frac{1}{2} \sqrt{\tau \sum_{i=1}^m \sigma_i \|L_i\|^2}\right) \min\left\{\frac{1}{\tau}, \frac{1}{\sigma_1}, \ldots, \frac{1}{\sigma_m} \right\},$$
which is a positive real number due to the assumption
\begin{align}
	\tau \sum_{i=1}^m \sigma_i \|L_i\|^2 < 4
\end{align}
made in Algorithm \ref{alg1}. Indeed, using that $2ab \leq \alpha a^2 + \frac{b^2}{\alpha}$ for any $a,\,b\in \R$ and any $\alpha \in \R_{++}$, it yields for each $i=1,\ldots,m$
\begin{align}
	\label{ineq1}
	2\| L_i \| \|x\|_{\h} \|v_i\|_{\g_i} \leq  \frac{\sigma_i \|L_i\|^2}{\sqrt{\tau \sum_{i=1}^m \sigma_i \|L_i\|^2}} \|x\|_{\h}^2 + \frac{\sqrt{\tau \sum_{i=1}^m \sigma_i \|L_i\|^2}}{\sigma_i} \|v_i\|_{\g_i}^2
\end{align}
and, consequently, for each $\fx=(x,v_1,\ldots,v_m) \in \fK$, it follows that
\begin{align}
	\<\fx, \f V\fx \>_{\fK} &= \frac{\|x\|_{\h}^2}{\tau} + \sum_{i=1}^m \frac{\|v_i\|_{\g_i}^2}{\sigma_i} - \sum_{i=1}^m \< L_i x, v_i\>_{\g_i} \notag \\
	&\geq \frac{\|x\|_{\h}^2}{\tau} + \sum_{i=1}^m \frac{\|v_i\|_{\g_i}^2}{\sigma_i} - \sum_{i=1}^m \|L_i\| \|x\|_{\h} \|v_i\|_{\g_i} \notag \\
	&\overset{\mathclap{\eqref{ineq1}}}{\geq} \left(1-\frac{1}{2} \sqrt{\tau \sum_{i=1}^m \sigma_i \|L_i\|^2}\right) \left( \frac{\|x\|_{\h}^2}{\tau} + \sum_{i=1}^m \frac{\|v_i\|_{\g_i}^2}{\sigma_i} \right)  \notag \\
	&\geq \left(1-\frac{1}{2} \sqrt{\tau \sum_{i=1}^m \sigma_i \|L_i\|^2}\right) \min\left\{\frac{1}{\tau}, \frac{1}{\sigma_1}, \ldots, \frac{1}{\sigma_m} \right\} \|\fx\|_{\fK}^2 \notag \\
	&= \rho  \|\fx\|_{\fK}^2.
\end{align}
Since $\f V$ is $\rho$-strongly positive, we have $\cl(\ran \f V)=\ran \f V$ (cf. \cite[Fact 2.19]{BauCom11}), $\zer \f V = \{0\}$ and, as $(\ran \f V)^{\bot} = \zer \f V^* = \zer \f V = \{0\}$ (see, for instance, \cite[Fact 2.18]{BauCom11}), it holds $\ran \f V = \fK$. Consequently, $\f V^{-1}$ exists and $\|\f V^{-1}\|\leq \frac{1}{\rho}$.

The algorithmic scheme \eqref{A1} is equivalent to
	\begin{align}\label{A1.1}
	  \left(\forall n\geq 0\right) \begin{array}{l} \left\lfloor \begin{array}{l}
		\frac{x_n- p_{1,n}}{\tau} - \frac{1}{2} \sum_{i=1}^m L_i^* v_{i,n} \in A(p_{1,n} -a_n) -z -\frac{a_n}{\tau} \\
		w_{1,n} = 2p_{1,n} - x_n \\
		\text{For }i=1,\ldots,m  \\
				\left\lfloor \begin{array}{l}
				 \frac{v_{i,n} - p_{2,i,n}}{\sigma_i}	- \frac{1}{2} L_i (x_n - p_{1,n}) \in -\frac{1}{2}L_i p_{1,n} + B_i^{-1}(p_{2,i,n} - b_{i,n}) +r_i- \frac{b_{i,n}}{\sigma_i} \\
					w_{2,i,n} = 2 p_{2,i,n} - v_{i,n} \\
				\end{array} \right.\\
		\frac{w_{1,n}-z_{1,n}}{\tau} - \frac{1}{2}\sum_{i=1}^m L_i^* w_{2,i,n} = 0 \\
		x_{n+1} = x_n + \lambda_n ( z_{1,n} - p_{1,n} ) \\
		\text{For }i=1,\ldots,m  \\
				\left\lfloor \begin{array}{l}
					\frac{w_{2,i,n} - z_{2,i,n}}{\sigma_i} - \frac{1}{2}L_i(w_{1,n}-z_{1,n}) \in -\frac{1}{2}L_i z_{1,n} + D_i^{-1}(z_{2,i,n}-d_{i,n}) - \frac{d_{i,n}}{\sigma_i} \\
					v_{i,n+1} = v_{i,n} + \lambda_n (z_{2,i,n} - p_{2,i,n}). \\
				\end{array} \right. \\		
		\end{array}
		\right.
		\end{array}
	\end{align}
We introduce for every $n \geq 0$ the following notations:
\begin{align}\label{seq0}
\left\{
	\begin{array}{l}
	\fx_n = (x_n,v_{1,n},\ldots,v_{m,n}) \\
	\fy_n =(p_{1,n},p_{2,1,n},\ldots,p_{2,m,n}) \\
	\fw_n =(w_{1,n},w_{2,1,n},\ldots,w_{2,m,n}) \\
	\fz_n =(z_{1,n},z_{2,1,n},\ldots,z_{2,m,n})
	\end{array}\right.
	\text{ and }
\left\{
	\begin{array}{l}
	\f d_n = (0,d_{1,n},\ldots,d_{m,n}) \\
	\f d_n^{\sigma} =(0,\frac{d_{1,n}}{\sigma_1},\ldots,\frac{d_{m,n}}{\sigma_m}) \\
	\f b_n =(a_n,b_{1,n},\ldots,b_{m,n}) \\
	\f b_n^{\sigma} =(\frac{a_n}{\tau},\frac{b_{1,n}}{\sigma_1},\ldots,\frac{b_{m,n}}{\sigma_m})
	\end{array}\right..
\end{align}

The scheme \eqref{A1.1} can equivalently be written in the form
\begin{align}
	\label{A1.2}
	\left(\forall n\geq 0\right)  \left\lfloor \begin{array}{l}
	\f V(\fx_n - \fy_n) \in \left(\frac{1}{2}\f S +\f M\right)\left(\fy_n - \f b_n\right) + \frac{1}{2} \f S \f b_n- \f b_n^{\sigma} \\
	\fw_n = 2 \fy_n - \fx_n \\
	\f V(\fw_n - \fz_n) \in \left(\frac{1}{2}\f S +\f Q\right)\left(\fz_n - \f d_n\right) + \frac{1}{2} \f S \f d_n- \f d_n^{\sigma} \\
	\fx_{n+1} = \fx_n + \lambda_n \left(\fz_n-\fy_n\right).	
	\end{array}
	\right.
\end{align}

We set for every $n \geq 0$
\begin{align}\label{seq1}
\begin{aligned}
	\f e_n^b&=\f V^{-1}\left( \left(\frac{1}{2}\f S + \f V\right)\f b_n - \f b_n^{\sigma} \right) \\
	\f e_n^d&=\f V^{-1}\left( \left(\frac{1}{2}\f S + \f V\right)\f d_n - \f d_n^{\sigma} \right).
\end{aligned}
\end{align}

Next we introduce the Hilbert space $\fK_{\f V}$ with inner product and norm respectively defined, for $\fx,\fy \in \fK$, as
\begin{align}\label{HSKV}
 \< \fx,\fy \>_{\fK_{\f V}} = \< \fx, \f V \fy \>_{\fK} \text{ and } \|\fx\|_{\fK_{\f V}} = \sqrt{\< \fx, \f V \fx \>_{\fK}},
\end{align}
respectively. Since $\frac{1}{2}\f S+ \f M$ and $\frac{1}{2}\f S+ \f Q$ are maximally monotone on $\fK$, the operators
\begin{align}\label{def1.1}
		\f B := \f V^{-1}\left(\frac{1}{2}\f S +\f M\right) \ \mbox{and} \ \f A:=\f V^{-1}\left(\frac{1}{2}\f S +\f Q\right)
\end{align}
are maximally monotone on $\fK_{\f V}$. Moreover, since $\f V$ is self-adjoint and $\rho$-strongly positive, one can easily see that weak and strong convergence in $\fK_{\f V}$ are equivalent with weak and strong convergence in $\fK$, respectively.

Consequently, for every $n \geq 0$ we have
\begin{align}\label{inc1.1}
	& \f V(\fx_n - \fy_n) \in \left(\frac{1}{2}\f S +\f M\right)\left(\fy_n - \f b_n\right) + \frac{1}{2} \f S \f b_n- \f b_n^{\sigma}  \notag \\
	\Leftrightarrow \ & \f V\fx_n \in \left(\f V + \frac{1}{2}\f S +\f M\right)\left(\fy_n - \f b_n\right) + \left(\frac{1}{2} \f S + \f V\right)\f b_n- \f b_n^{\sigma} \notag \\
	\Leftrightarrow \ & \fx_n \in \left(\id + \f V^{-1}\left(\frac{1}{2}\f S +\f M\right)\right)\left(\fy_n - \f b_n\right) + \f V^{-1}\left(\left(\frac{1}{2} \f S + \f V\right)\f b_n- \f b_n^{\sigma}\right) \notag \\
	\Leftrightarrow \ & \fy_n = \left(\id + \f V^{-1}\left(\frac{1}{2}\f S +\f M\right)\right)^{-1}\left(\fx_n - \f e_n^b\right) + \f b_n \notag \\
\Leftrightarrow \ & \fy_n = \left(\id + \f B\right)^{-1}\left(\fx_n - \f e_n^b\right) + \f b_n
\end{align}
and
\begin{align}\label{inc1.2}
	& \f V(\fw_n - \fz_n) \in \left(\frac{1}{2}\f S +\f Q\right)\left(\fz_n - \f d_n\right) + \frac{1}{2} \f S \f d_n- \f d_n^{\sigma} \notag \\
	\Leftrightarrow \ & \fz_n = \left(\id + \f V^{-1}\left(\frac{1}{2}\f S +\f Q\right)\right)^{-1}\left(\fw_n - \f e_n^d\right) + \f d_n \notag \\
\Leftrightarrow \ & \fz_n = \left(\id + \f A\right)^{-1}\left(\fw_n - \f e_n^d\right) + \f d_n.
\end{align}
Thus, the iterative rules in \eqref{A1.2} become
\begin{align}
	\label{A1.3}
	\left(\forall n\geq 0\right)  \left\lfloor \begin{array}{l}
	\fy_n = J_{\f B}\left(\fx_n - \f e_n^b\right) + \f b_n \\
	\fz_n = J_{\f A}\left(2\fy_n - \fx_n - \f e_n^d\right) + \f d_n \\
	\fx_{n+1} = \fx_n + \lambda_n(\fz_n-\fy_n)	
	\end{array}
	\right..
\end{align}

In addition, we have
\begin{align*}
	\zer \left( \f A + \f B\right) = \zer \left( \f V^{-1}\left( \f M + \f S + \f Q\right) \right) = \zer \left( \f M + \f S + \f Q \right).
\end{align*}

By defining for every $n \geq 0$
$$\f \beta_n = J_{\f B}\left(\fx_n - \f e_n^b\right) - J_{\f B}\left(\fx_n\right) + \f b_n \ \mbox{and} \ \f \alpha_n =  J_{\f A}\left(2\fy_n - \fx_n - \f e_n^d\right)  - J_{\f A}\left(2\fy_n - \fx_n\right) + \f d_n,$$
the iterative scheme \eqref{A1.3} becomes
\begin{align}
	\label{A1.5}
	\left(\forall n\geq 0\right)  \left\lfloor \begin{array}{l}
	\fy_n = J_{\f B}(\fx_n) + \f \beta_n \\
	\fz_n = J_{\f A}\left(2\fy_n - \fx_n\right) + \f \alpha_n \\
	\fx_{n+1} = \fx_n + \lambda_n(\fz_n-\fy_n)	
	\end{array}
	\right.,
\end{align}
thus, it has the structure of an error-tolerant Douglas-Rachford algorithm (see \cite{Com09}).

\ref{th1.01} The assumptions made on the error sequences yield
\begin{align}
	\label{sum1.1}
	\sum_{n=0}^{+\infty} \lambda_n \|\f d_n\|_{\fK} < +\infty, \ \sum_{n=0}^{+\infty} \lambda_n\|\f d_n^{\sigma}\|_{\fK} < +\infty,
	\sum_{n=0}^{+\infty} \lambda_n \|\f b_n\|_{\fK} < +\infty, \ \sum_{n=0}^{+\infty} \lambda_n \|\f b_n^{\sigma}\|_{\fK} < +\infty
\end{align}
and, by the boundedness of $\f V^{-1}$, $\f S$ and $\f V$, if follows
\begin{align}
\label{sum1.2}
\sum_{n=0}^{+\infty} \lambda_n \|\f e_n^b\|_{\fK} < +\infty \text{ and } \sum_{n=0}^{+\infty} \lambda_n \|\f e_n^d \|_{\fK} < +\infty.
\end{align}
Further, by making use of the nonexpansiveness of the resolvents, the error sequences satisfy
\begin{align*}
\sum_{n=0}^{+\infty} \lambda_n \left [\|\f \alpha_n\|_{\fK} + \|\f \beta_n\|_{\fK} \right] \leq & \sum_{n=0}^{+\infty} \lambda_n \left[\|J_{\f A}\left(2\fy_n - \fx_n - \f e_n^d\right)  - J_{\f A}\left(2\fy_n - \fx_n\right)\|_{\fK} + \|\f d_n\|_{\fK}\right.\\
& \quad \qquad \left. + \|J_{\f B}\left(\fx_n - \f e_n^b\right) - J_{\f B}\left(\fx_n\right)\|_{\fK} + \|\f b_n\|_{\fK} \right]\\
\leq & \sum_{n=0}^{+\infty} \lambda_n \left [\|\f e_n^d\|_{\fK} + \|\f d_n\|_{\fK} + \|\f e_n^b\|_{\fK} + \|\f b_n\|_{\fK} \right] < + \infty.
\end{align*}

By the linearity and boundedness of $\f V$ it follows that
\begin{align*}
	\sum_{n=0}^{+\infty} \lambda_n \left [\|\f \alpha_n\|_{\fK_{\f V}} + \|\f \beta_n\|_{\fK_{\f V}} \right]  < +\infty.
\end{align*}

\ref{th1.01}\ref{th1.1} According to \cite[Theorem 2.1(i)(a)]{Com09} the sequence $(\fx_n)_{n\geq 0}$ converges weakly in $\fK_{\f V}$ and, consequently, in $\fK$ to a point $\fbx \in \Fix\left(R_{\f A} R_{\f B}\right)$ with $J_{\f B}\fbx \in \zer(\f A + \f B)$. The claim follows by identifying $J_{\f B}\fbx$ and by noting \eqref{optcond}.

\ref{th1.01}\ref{th1.2} According to \cite[Theorem 2.1(i)(b)]{Com09} it follows that $(R_{\f A} R_{\f B} \fx_n - \fx_n) \rightarrow 0 \ (n \rightarrow +\infty)$. From \eqref{A1.5} it follows that for every $n \geq 0$
$$\lambda_n (\fz_n-\fy_n) = \frac{\lambda_n}{2} \left (R_{\f A}(R_{\f B}(\fx_n) + 2\f \beta_n) - \fx_n + 2\f \alpha_n  \right),$$
thus, by taking into consideration the nonexpansiveness of the reflected resolvent and the boundedness of $(\lambda_n)_{n \geq 0}$, it yields
\begin{align*}
\|\lambda_n (\fz_n-\fy_n)\|_{\fK_{\f V}} \leq & \frac{\lambda_n}{2} \|R_{\f A} R_{\f B} \fx_n - \fx_n\|_{\fK_{\f V}}\\
& + \frac{\lambda_n}{2} \|R_{\f A}(R_{\f B}\fx_n + 2\f \beta_n)-R_{\f A} (R_{\f B} \fx_n) + 2\f \alpha_n\|_{\fK_{\f V}}\\
\leq & \|R_{\f A} R_{\f B} \fx_n - \fx_n\|_{\fK_{\f V}} + \lambda_n \left [\|\f \alpha_n\|_{\fK_{\f V}} + \|\f \beta_n\|_{\fK_{\f V}} \right].
\end{align*}
The claim follows by taking into account that $\lambda_n \left [\|\f \alpha_n\|_{\fK_{\f V}} + \|\f \beta_n\|_{\fK_{\f V}} \right] \rightarrow 0 \ (n \rightarrow +\infty)$.

\ref{th1.01}\ref{th1.3} As shown in \ref{th1.1}, we have that $\fx_n \rightarrow \fbx \in \Fix\left(R_{\f A} R_{\f B}\right) \ (n \rightarrow +\infty)$ with $J_{\f B}\fbx \in \zer(\f A + \f B) = \zer(\f M + \f S + \f Q)$. Moreover, by the assumptions we have $\f b_n \rightarrow 0 \ (n \rightarrow +\infty)$ (cf. \eqref{seq0}, thus $\f e_n^b \rightarrow 0 \ (n \rightarrow +\infty)$ (cf. \eqref{seq1}) and $\f \beta_n \rightarrow 0 \ (n \rightarrow +\infty)$. Hence, by the continuity of $J_{\f B}$ and \eqref{A1.5}, we have
$$\fy_n = J_{\f B}\left(\fx_n\right) + \f \beta_n \rightarrow J_{\f B} \fbx \in \zer\left( \f M + \f S + \f Q \right) \ (n \rightarrow +\infty).$$

\ref{th1.02} The assumptions made on the error sequences yield
\begin{align*}
	\sum_{n=0}^{+\infty} \|\f d_n\|_{\fK} < +\infty, \ \sum_{n=0}^{+\infty} \|\f d_n^{\sigma}\|_{\fK} < +\infty,
	\sum_{n=0}^{+\infty} \|\f b_n\|_{\fK} < +\infty, \ \sum_{n=0}^{+\infty} \|\f b_n^{\sigma}\|_{\fK} < +\infty,
\end{align*}
thus,
\begin{align*}
\sum_{n=0}^{+\infty} \|\f e_n^b\|_{\fK} < +\infty \text{ and } \sum_{n=0}^{+\infty}\|\f e_n^d \|_{\fK} < +\infty.
\end{align*}
This implies that
\begin{align*}
\sum_{n=0}^{+\infty} \left [\|\f \alpha_n\|_{\fK} + \|\f \beta_n\|_{\fK} \right] < + \infty
\end{align*}
which, due to the linearity and boundedness of $\f V$, yields
\begin{align*}
	\sum_{n=0}^{+\infty} \left [\|\f \alpha_n\|_{\fK_{\f V}} + \|\f \beta_n\|_{\fK_{\f V}} \right]  < +\infty.
\end{align*}
Since $A$ and $B_i^{-1}, i=1,...,m,$ are uniformly monotone, there exist increasing functions $\phi_{A}: \R_+ \rightarrow [0,+\infty]$ and  $\phi_{B_i^{-1}}: \R_+ \rightarrow [0,+\infty], i=1,...,m$, vanishing only at $0$, such that
\begin{align}
		\begin{aligned}
				 \<x-y,u-z\> &\geq \phi_A \left( \| x-y \|_{\h} \right)  \ \forall\,(x,u),(y,z) \in \gra A  \\
				 \<v-w,p-q\> &\geq \phi_{B_i^{-1}} \left( \| v-w \|_{\g_i} \right)  \ \forall\,(v,p),(w,q) \in \gra B_i^{-1} \ \forall i=1,...,m.
		\end{aligned}
\end{align}
 The function $\phi_{\f M}: \R_+ \rightarrow [0,+\infty]$,
\begin{align}
	\label{def1.2}
	\phi_{\f M}(c) = \inf\left\{ \phi_{A}(a) + \sum_{i=1}^m\phi_{B_i^{-1}}(b_i): \sqrt{a^2 + \sum_{i=1}^m{b_i^2}}=c \right\},
\end{align}
is increasing and vanishes only at $0$ and it fulfills for each $(\fx,\f u), (\fy, \fz) \in \gra \f M$
\begin{align}
		\label{ineq1.1}
		\<\fx-\fy, \f u-\fz \>_{\fK} \geq \phi_{\f M}\left( \|\fx-\fy\|_{\fK} \right).
\end{align}
Thus, $\f M$ is uniformly monotone on $\fK$.

The function $\phi_{\f B}: \R_+ \rightarrow [0,+\infty]$, $\phi_{\f B}(t)=\phi_{\f M}\left(\frac{1}{\sqrt{\|\f V\|}}t\right)$, is increasing and vanishes only at $0$. Let be $(\fx,\f u), (\fy,\fz) \in \gra \f B$. Then there exist $\f v \in \f M \fx$ and  $\f w \in \f M\fy$ fulfilling $\f V \f u = \frac{1}{2}\f S \fx + \f v$ and $\f V \fz = \frac{1}{2}\f S \fy + \f w$ and it holds
\begin{align}\label{ineq1.2}
 \< \fx-\fy, \f u - \fz \>_{\fK_{\f V}}
&=	\< \fx-\fy, \f V \f u - \f V \fz \>_{\fK} \notag \\
&=  \< \fx-\fy, \left(\frac{1}{2}\f S \f x+ \f v\right)- \left(\frac{1}{2} \f S \f y+ \f w \right) \>_{\fK} \notag \\
&\overset{\mathclap{\eqref{ineq1.1}}}{\geq} \phi_{\f M}\left( \| \fx - \fy \|_{\fK} \right) \notag \\
&\geq \phi_{\f M}\left(\frac{1}{\sqrt{\|\f V\|}}\|\fx - \fy \|_{\fK_{\f V}} \right) \notag \\
&= \phi_{\f B}\left( \| \fx - \fy \|_{\fK_{\f V}} \right).
\end{align}
Consequently, $\f B$ is uniformly monotone on $\fK_{\f V}$ and, according to \cite[Theorem 2.1(ii)(b)]{Com09}, $(J_{\f B}\fx_n)_{n\geq 0}$ converges strongly to the unique element $\fby \in \zer(\f A + \f B) = \zer\left(\f M + \f S+ \f Q\right)$. In the light of \eqref{A1.5} and using that $\f \beta_n \rightarrow 0 \ (n \rightarrow +\infty)$, it follows that $\fy_n \rightarrow \fby \ (n \rightarrow +\infty)$.
\end{proof}

\begin{remark}\label{rm1.1} Some remarks concerning Algorithm \ref{alg1} and Theorem \ref{thm1} are in order.
\begin{enumerate}[label={(\roman*)}]
	\setlength{\itemsep}{-2pt}
		\item \label{rm1.1.1} Algorithm \ref{alg1} is a fully decomposable iterative method, as each of the operators occurring in Problem \ref{p1} is processed individually. Moreover, a considerable number of steps in \eqref{A1} can be executed in parallel.
		\item \label{rm1.1.2} The proof of Theorem \ref{thm1}, which states the convergence of Algorithm \ref{alg1}, relies on the reformulation of the iterative scheme as an inexact Douglas-Rachford method in a specific real Hilbert space. For the use of a similar technique in the context of a forward-backward-type method we refer to \cite{Vu11}.
		\item \label{rm1.1.3} We would like to notice that the assumption $\sum_{n=0}^{+\infty}\lambda_n\|a_n\|_{\h} < +\infty$ does not necessarily imply neither that $(\|a_n\|_{\h})_{n \geq 0}$ is summable nor that $(a_n)_{n \geq 0}$ (weakly or strongly) converges to $0$ as $n \rightarrow +\infty$. We refer to \cite[Remark 2.2(iii)]{Com09} for further considerations on the conditions imposed on the error sequences in Theorem \ref{thm1}.
\end{enumerate}
\end{remark}

\begin{remark}\label{rm1.2} In the following we emphasize the relations between the proposed algorithm and other existent primal-dual iterative schemes.
\begin{enumerate}[label={(\roman*)}]
		\setlength{\itemsep}{-2pt}
		\item \label{rm1.2.1} Other iterative methods for solving the primal-dual monotone inclusion pair introduced in Problem \ref{p1} were given in
\cite{ComPes12} and \cite{Vu11} for $D_i^{-1}, i=1,...,m$ monotone Lipschitzian and cocoercive operators, respectively. Different to the approach proposed in this subsection, there, the operators $D_i^{-1}, i=1,...,m$, are processed within some forward steps.
		\item \label{rm1.2.2} When for every $i=1,...,m$ one takes $D_i(0) = \g_i$ and $D_i(v) = \varnothing \ \forall v \in \g_i \setminus \{0\}$, the algorithms proposed in \cite[Theorem 3.1]{ComPes12}  (see, also, \cite[Theorem 3.1]{BriCom11} for the case $m=1$) and \cite[Theorem 3.1]{Vu11} applied to Problem \ref{p1} differ from Algorithm \ref{alg1}.
\item \label{rm1.2.4} When solving the particular case of a primal-dual pair of convex optimization problems discussed in Example \ref{ex1} one can make use of the iterative schemes provided in \cite[Algorithm 3.1]{Con12}  and \cite[Algorithm 1]{ChaPoc11}. Let us notice that particularizing Algorithm \ref{alg1} to this framework gives rise to a numerical scheme different to the ones in the mentioned literature.
\end{enumerate}
\end{remark}

\subsection{A second primal-dual algorithm}\label{subsectionInc2}
In Algorithm \ref{alg1} each operator $L_i$ and its adjoint $L_i^*, i=1,...,m$ are processed two times, however, for large-scale optimization problems these matrix-vector multiplications may be expensive compared with the computation of the resolvents of the operators $A$, $B_i^{-1}$ and $D_i^{-1}, i=1,...,m$.

The second primal-dual algorithm we propose for solving  the monotone inclusions in Problem \ref{p1} has the particularity that it evaluates each operator $L_i$ and its adjoint $L_i^*, i=1,...,m$, only once.

\begin{algorithm}\label{alg2} \text{ }\newline
	Let $x_0 \in \h$, $(y_{1,0}, \ldots, y_{m,0}) \in \g_1 \times \ldots \times \g_m$, $(v_{1,0}, \ldots, v_{m,0}) \in \g_1 \times \ldots \times \g_m$, and $\tau$ and $\sigma_i, i=1,...,m,$ be strictly positive real numbers such that
$$	\tau \sum_{i=1}^m \sigma_i \|L_i\|^2 < \frac{1}{4}.$$
Furthermore, set $\gamma_i=\sigma_i^{-1}\tau\sum_{i=1}^m \sigma_i\|L_i\|^2, i=1,...,m$, let $(\lambda_n)_{n\geq 0}$ be a sequence in $(0,2)$, $(a_n)_{n\geq 0}$ a sequence in $\h$, $(b_{i,n})_{n\geq 0}$ and $(d_{i,n})_{n\geq 0}$ sequences in $\g_i$ for all $i=1,\ldots,m$ and set
\begin{align}\label{A2}
	  \left(\forall n\geq 0\right) \begin{array}{l} \left\lfloor \begin{array}{l}
		p_{1,n} = J_{\tau A}\left( x_n - \tau\left( \sum_{i=1}^m L_i^* v_{i,n} - z \right)\right) + a_n \\
		x_{n+1} = x_n + \lambda_n ( p_{1,n} - x_n ) \\
		\text{For }i=1,\ldots,m  \\
		\left\lfloor \begin{array}{l}
					p_{2,i,n} = J_{\gamma_i D_i}\left(y_{i,n} + \gamma_i v_{i,n} \right) + d_{i,n} \\
					y_{i,n+1} = y_{i,n} + \lambda_n (p_{2,i,n} - y_{i,n})\\
					p_{3,i,n} = J_{\sigma_i B_i^{-1}}\left(v_{i,n} + \sigma_i \left( L_i (2p_{1,n} - x_n) - (2p_{2,i,n} - y_{i,n}) - r_i \right)\right) + b_{i,n}\\
					v_{i,n+1} = v_{i,n} + \lambda_n (p_{3,i,n} - v_{i,n}).
				\end{array} \right.	
		\end{array}
		\right.
		\end{array}
	\end{align}
\end{algorithm}

\begin{theorem}\label{thm2}
In Problem \ref{p1} suppose that
\begin{align}\label{zin2}
	z \in \ran \left( A +  \sum_{i=1}^m L_i^*(B_i\Box D_i)(L_i \cdot -r_i) \right).
\end{align}
and consider the sequences generated by Algorithm \ref{alg2}.
\begin{enumerate}[label={(\roman*)}]
	\setlength{\itemsep}{-2pt}
		\item  \label{th2.01} If
$$\sum_{n=0}^{+\infty}\lambda_n\|a_n\|_{\h} < +\infty, \quad \sum_{n=0}^{+\infty}\lambda_n (\|d_{i,n}\|_{\g_i} + \|b_{i,n}\|_{\g_i} ) < + \infty, i=1,\ldots,m,$$ and
$\sum_{n=0}^{+\infty} \lambda_n (2 - \lambda_n) = +\infty,$
then
\begin{enumerate}[label={(\alph*)}]
	\setlength{\itemsep}{-2pt}
		\item  \label{th2.1} $(x_n,y_{1,n},\ldots,y_{m,n},v_{1,n},\ldots,v_{m,n})_{n\geq 0}$ converges weakly to a point $(\bx,\by_1,...,\by_m,
\bv_1,...,\bv_m)\in\h\times\g_1 \times \ldots \times \g_m\times\g_1 \times \ldots \times \g_m$ such that $(\bx,\bv_1,...,\bv_m)$ is a primal-dual solution to Problem \ref{p1}.
\item  \label{th2.2} $\lambda_n (p_{1,n}-x_{n}) \rightarrow 0 \  (n \rightarrow +\infty)$, $\lambda_n(p_{2,i,n}-y_{i,n}) \rightarrow 0 \ (n \rightarrow +\infty)$ and $\lambda_n(p_{3,i,n}-v_{i,n}) \rightarrow 0 \ (n \rightarrow +\infty)$ for $i=1,...,m$.
		\item \label{th2.3}  whenever $\h$ and $\g_i, i=1,...,m,$ are finite-dimensional Hilbert spaces, $(x_n,v_{1,n},\ldots,v_{m,n})_{n\geq 0}$ converges strongly to a primal-dual solution of Problem \ref{p1}.
		\end{enumerate}
		\item  \label{th2.02} If
$$\sum_{n=0}^{+\infty}\|a_n\|_{\h} < +\infty, \quad \sum_{n=0}^{+\infty} (\|d_{i,n}\|_{\g_i} + \|b_{i,n}\|_{g_i} ) < + \infty, i=1,\ldots,m, \quad \inf_{n\geq 0} \lambda_n > 0$$
$$\mbox{and} \ A, B_i^{-1} \ \mbox{and} \ D_i, i=1,...,m, \ \mbox{are uniformly monotone},$$
then $(p_{1,n}, p_{3,1,n},\ldots,p_{3,m,n})_{n \geq 0}$ converges strongly to the unique primal-dual solution of Problem \ref{p1}.
\end{enumerate}
\end{theorem}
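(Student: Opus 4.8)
The plan is to mirror the proof of Theorem~\ref{thm1}, realizing Algorithm~\ref{alg2} as an inexact Douglas--Rachford iteration, but now in the larger product space $\fK = \h \times \fG \times \fG$ with $\fG = \g_1 \times \ldots \times \g_m$; the extra copy of $\fG$ carries the auxiliary variables $(y_{1,n},\ldots,y_{m,n})$, which split each parallel sum $B_i \Box D_i$ into its two constituents. First I would encode the primal-dual conditions \eqref{operator-proof-conditions-full} as a monotone inclusion on $\fK$: writing $L_i\bx - r_i \in B_i^{-1}\bv_i + D_i^{-1}\bv_i$ through an auxiliary $\by_i$ with $\bv_i \in D_i\by_i$ and $L_i\bx - r_i - \by_i \in B_i^{-1}\bv_i$, I introduce the maximally monotone operator
\[ \f M : \fK \to 2^{\fK},\quad (x,(y_i),(v_i)) \mapsto \big(Ax - z,\, (D_iy_i),\, (B_i^{-1}v_i + r_i)\big), \]
maximally monotone since $A$, the $D_i$ and the $B_i^{-1}$ are, together with the skew (hence maximally monotone) bounded linear operator
\[ \f S : \fK \to \fK,\quad (x,(y_i),(v_i)) \mapsto \Big(\textstyle\sum_{i=1}^m L_i^* v_i,\, (-v_i),\, (y_i - L_ix)\Big). \]
A short computation in the spirit of \cite[Eqs.~(3.21) and (3.22)]{ComPes12} then shows that $\zer(\f M + \f S)\neq\varnothing$ is equivalent to \eqref{zin2} and that each element of $\zer(\f M + \f S)$ delivers, via its $x$- and $v$-components, a primal-dual solution to Problem~\ref{p1}.

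The crux is to produce a self-adjoint, $\rho$-strongly positive operator $\f V$ on $\fK$ together with a two-operator decomposition $\f M + \f S = (\f S_B + \f M_1) + (\f S_A + \f M_2)$, where $\f M_1$ gathers $A$ and the $D_i$, $\f M_2$ gathers the $B_i^{-1}$, and the skew part $\f S$ is distributed so that both summands remain maximally monotone, for which the renormed operators $\f B := \f V^{-1}(\f S_B + \f M_1)$ and $\f A := \f V^{-1}(\f S_A + \f M_2)$ are maximally monotone on $\fK_{\f V}$ and the three resolvent evaluations in \eqref{A2} decouple \emph{exactly} into $J_{\tau A}$, $J_{\gamma_i D_i}$ and $J_{\sigma_i B_i^{-1}}$. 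Here $\f V$ must be built block-wise so that $\f S_B$ and $\f V$ combine into a block-triangular operator, i.e. so that the associated resolvent decouples in Gauss--Seidel fashion, which is precisely what produces the reflected arguments $2p_{1,n}-x_n$ and $2p_{2,i,n}-y_{i,n}$ feeding the resolvent of $\sigma_iB_i^{-1}$. I expect the couplings $-L_i^*$ (between the $x$- and $v_i$-blocks) and $-\id$ (between the $y_i$- and $v_i$-blocks) to appear as off-diagonal entries of $\f V$, the diagonal being $\tfrac{1}{\tau}\id$, $\tfrac{1}{\gamma_i}\id$, $\tfrac{1}{\sigma_i}\id$; the calibration $\gamma_i = \sigma_i^{-1}\tau\sum_{j}\sigma_j\|L_j\|^2$ is exactly what compensates the $L_iL_j^*$ cross terms generated by the elimination and keeps $\f V$ positive definite.

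With $\f V$ in hand I would verify $\rho$-strong positivity by the same Young-type estimate $2ab \leq \alpha a^2 + \alpha^{-1}b^2$ used in \eqref{ineq1}, now applied to the cross terms $\<L_i x, v_i\>$ and $\<y_i, v_i\>$; the constant should come out of the form $\rho = \big(1 - 2\sqrt{\tau\sum_{i=1}^m\sigma_i\|L_i\|^2}\big)\min\{\tfrac{1}{\tau}, \tfrac{1}{\gamma_i}, \tfrac{1}{\sigma_i}\}$, which is strictly positive exactly because of the bound $\tau\sum_{i=1}^m\sigma_i\|L_i\|^2 < \tfrac14$ imposed in Algorithm~\ref{alg2} (the threshold $\tfrac14$, against the $4$ of Algorithm~\ref{alg1}, reflecting that the $v$-block now couples to two other blocks). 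Thereafter, as in Theorem~\ref{thm1}, $\f V$ is boundedly invertible with $\|\f V^{-1}\|\leq\rho^{-1}$, weak and strong convergence in $\fK_{\f V}$ coincide with those in $\fK$, and \eqref{A2} rewrites as $\fy_n = J_{\f B}(\fx_n - \f e_n^b) + \f b_n$, $\fz_n = J_{\f A}(2\fy_n - \fx_n - \f e_n^d) + \f d_n$, $\fx_{n+1} = \fx_n + \lambda_n(\fz_n - \fy_n)$, for error vectors built from $a_n,b_{i,n},d_{i,n}$ and $\f V^{-1},\f S,\f V$ whose $\lambda_n$-weighted summability follows verbatim as in \eqref{sum1.1}--\eqref{sum1.2}.

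The remaining conclusions then follow from \cite[Theorem 2.1]{Com09} just as in Theorem~\ref{thm1}: part~(i)(a) from weak convergence of $(\fx_n)$ to a fixed point of $R_{\f A}R_{\f B}$ with $J_{\f B}\fbx \in \zer(\f A + \f B) = \zer(\f M + \f S)$, the primal-dual solution being read off through its $x$- and $v$-components (the $y$-block recording the splitting of the parallel sums); part~(i)(b) from $R_{\f A}R_{\f B}\fx_n - \fx_n \to 0$ and the nonexpansiveness of the reflected resolvents, which transfers to $\lambda_n(p_{1,n}-x_n)$, $\lambda_n(p_{2,i,n}-y_{i,n})$ and $\lambda_n(p_{3,i,n}-v_{i,n})$; part~(i)(c) from the finite-dimensional statement in \cite{Com09} and continuity of the resolvents; and part~(ii) by noting that uniform monotonicity of $A$ and the $D_i$ renders $\f B$ uniformly monotone while that of the $B_i^{-1}$ renders $\f A$ uniformly monotone, through the infimal modulus \eqref{def1.2} and the $\f V$-metric estimate \eqref{ineq1.2}, whence \cite[Theorem 2.1(ii)(b)]{Com09} gives strong convergence of $(p_{1,n},p_{3,1,n},\ldots,p_{3,m,n})$ to the unique primal-dual solution. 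The single genuinely hard step is the first: pinning down $\f V$ and the skew split so that the Gauss--Seidel structure reproduces \eqref{A2} line by line, and simultaneously certifying positive definiteness of $\f V$ under $\tau\sum_{i=1}^m\sigma_i\|L_i\|^2 < \tfrac14$ — the $\gamma_i$-calibration cancelling the $L_iL_j^*$ cross terms is where all the arithmetic lives.
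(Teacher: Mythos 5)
Your setup coincides with the paper's: the space $\fK = \h \times \fG \times \fG$, the operators $\f M$ and $\f S$, the block operator $\f V$ with diagonal entries $\tau^{-1}, \gamma_i^{-1}, \sigma_i^{-1}$ and off-diagonal couplings, and the $\rho$-strong positivity estimate under $\tau\sum_{i=1}^m \sigma_i\|L_i\|^2 < \tfrac14$ are all exactly what the paper uses. But the step you yourself flag as the hard one --- the two-operator decomposition $\f M + \f S = (\f S_B + \f M_1) + (\f S_A + \f M_2)$ with $A$ and the $D_i$ in the first summand, the $B_i^{-1}$ in the second, run as a genuine Douglas--Rachford iteration $\fy_n = J_{\f B}\fx_n$, $\fz_n = J_{\f A}(2\fy_n - \fx_n)$, $\fx_{n+1} = \fx_n + \lambda_n(\fz_n - \fy_n)$ --- cannot reproduce Algorithm \ref{alg2}, and this is a genuine gap. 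The obstruction is visible in the update structure: a genuine two-operator DR step updates each block by the difference of two resolvent outputs, which is what Algorithm \ref{alg1} does ($x_{n+1} = x_n + \lambda_n(z_{1,n} - p_{1,n})$), whereas Algorithm \ref{alg2} updates by the difference between a resolvent output and the iterate itself ($x_{n+1} = x_n + \lambda_n(p_{1,n} - x_n)$, and likewise for the $y$- and $v$-blocks). Matching your scheme to \eqref{A2} forces $(\fy_n)_{v_i} = v_{i,n}$ (the first resolvent must fix the $v$-block) and $(\fz_n)_x = 2p_{1,n}-x_n$, $(\fz_n)_{y_i} = 2p_{2,i,n}-y_{i,n}$ (the second must fix the $x$- and $y$-blocks of its argument $2\fy_n - \fx_n$). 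Writing out the $v_i$-component of $\f V(\fx_n - \fy_n) \in (\f S_B + \f M_1)\fy_n$ under $(\fy_n)_{v_i} = v_{i,n}$ gives $-L_i(x_n - p_{1,n}) + (y_{i,n} - p_{2,i,n})$ on the left-hand side, a quantity depending on $x_n$ and $y_{i,n}$ separately; since $\f M_1$ contains no $B_i^{-1}$, its $v_i$-block is at most a constant, so the right-hand side is $[\f S_B \fy_n]_{v_i}$ plus a constant, which depends only on $\fy_n = (p_{1,n}, (p_{2,i,n}), (v_{i,n}))$. No distribution of the skew part can make these agree along all iterates, so the claimed reformulation fails.

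What the paper does instead is the \emph{degenerate} splitting: the whole operator goes into one piece, $\f A := \f V^{-1}(\f S + \f M)$, and the second DR operator is taken to be $\f B \equiv 0$, so $J_{\f B} = \id$ and the DR recursion collapses to the relaxed resolvent iteration $\fx_{n+1} = \fx_n + \lambda_n\bigl(J_{\f A}(\fx_n - \f e_n) + \f a_n - \fx_n\bigr)$, which is precisely \eqref{A2}; the Gauss--Seidel decoupling of the single resolvent $J_{\f A}$ into $J_{\tau A}$, $J_{\gamma_i D_i}$, $J_{\sigma_i B_i^{-1}}$ with the reflected arguments comes, as you correctly anticipated, from the off-diagonal entries of $\f V$ --- but it happens inside one resolvent, not across two. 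This also explains why part \ref{th2.02} of the theorem assumes uniform monotonicity of all three families $A$, $B_i^{-1}$ \emph{and} $D_i$: they all sit inside the single operator $\f A$, and the paper must additionally rewrite the iteration with the roles of $\f A$ and $\f B$ exchanged (harmless since $J_{\f B}=\id$) so that the uniformly monotone operator occupies the slot required by the cited result \cite[Theorem 2.1(ii)(b)]{Com09}. Your remaining steps (error summability, equivalence of convergence in $\fK$ and $\fK_{\f V}$, identification of $\zer(\f M + \f S)$ with primal-dual solutions) are correct and identical to the paper's, but they all rest on the reformulation, which as you propose it does not hold.
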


\begin{proof}
We let $\fG =\g_1 \times \ldots \times \g_m$ be the real Hilbert space endowed with the inner product and associated norm defined in \eqref{def1.01} and consider
	$$ \fK = \h \times \fG \times \fG, $$
the real Hilbert space endowed with inner product and associated norm defined for $\f x=(x, \f y, \f v), \f u=(u, \f q, \f p) \in \fK$ as
\begin{align}
	\< \f x, \f u \>_{\fK} = \< x,u \>_{\h} + \< \f y,\f q \>_{\fG} + \< \f v, \f p \>_{\fG} \text{ and }
	\|\f x\|_{\fK}	= \sqrt{\|x\|_{\h}^2 + \|\f y\|_{\fG}^2 + \|\f v\|_{\fG}^2},
\end{align}
respectively. In what follows we set
$$\f y = (y_1,...,y_m), \quad \f v = (v_1,...,v_m), \quad \fby = (\by_1,...,\by_m), \quad \fbv = (\bv_1,...,\bv_m).$$

Consider the set-valued operator
\begin{align*}
	\begin{aligned}
	\f M : \fK &\rightarrow 2^{\fK}, \ (x,\f y, \f v) &\mapsto (-z + Ax, D_1y_1,\ldots,D_m y_m, r_1 + B_1^{-1}v_1, \ldots, r_m + B_m^{-1}v_m),
	\end{aligned}
\end{align*}
which is maximally monotone,  since $A, B_i$ and $D_i, i=1,...,m,$ are maximally monotone (cf. \cite[Proposition 20.22 and Proposition 20.23]{BauCom11}) and the bounded linear operator
\begin{align*}
	\begin{aligned}
	\f S : \fK &\rightarrow \fK, \quad (x, \f y, \f v) &\mapsto \left(\sum_{i=1}^m L_i^* v_i, -v_1, \ldots, -v_m, -L_1 x + y_1 , \ldots, - L_m x + y_m\right),
	\end{aligned}
\end{align*}
which proves to be skew (i.\,e. $\f S^*=-\f S$) and hence maximally monotone (cf. \cite[Example 20.30]{BauCom11}). Since $\dom \f S = \fK$, the sum $\f M + \f S$ is maximally monotone, as well (cf. \cite[Corollary 24.4(i)]{BauCom11}). Further, we have
\begin{align}\label{zer2.1}
		\eqref{zin2} &\Leftrightarrow (\exists\,x \in \h)\ z \in Ax +  \sum_{i=1}^m L_i^*\left( B_i \Box D_i \right)\left(L_i x - r_i\right) \notag \\
		&\Leftrightarrow (\exists\, (x,\f v) \in \h \times \fG) \left\{
				\begin{array}{l}
						z \in A x + \sum_{i=1}^m L_i^* v_i \\
						v_i \in \left( B_i \Box D_i \right)\left(L_i x - r_i\right),\, i=1,\ldots,m
				\end{array}
	\right. \notag \\
		&\Leftrightarrow (\exists\, (x,\f v) \in \h \times \fG) \left\{
				\begin{array}{l}
						z \in A x + \sum_{i=1}^m L_i^* v_i \\
						L_i x - r_i \in B_i^{-1}v_i + D_i^{-1} v_i,\, i=1,\ldots,m
				\end{array}
	\right. \notag \\
		&\Leftrightarrow (\exists\, (x,\f y,\f v) \in \fK) \left\{
				\begin{array}{l}
						0 \in -z + A x + \sum_{i=1}^m L_i^* v_i \\
						0 \in D_i y_i - v_i,  \, i=1,\ldots,m \\
						0 \in r_i + B_i^{-1}v_i - L_i x + y_i,\, i=1,\ldots,m
				\end{array}
	\right. \notag \\	
	 &\Leftrightarrow (\exists\, (x,\f y,\f v) \in \fK)\ (0,\ldots,0) \in \left(\f M + \f S\right)(x,\f y,\f v) \notag \\
	 &\Leftrightarrow \zer\left(\f M + \f S\right) \neq \varnothing.
\end{align}
From the above calculations it follows that
{\allowdisplaybreaks
\begin{align}
	(x,\fy, \fv) \in \zer\left(\f M + \f S \right)
	& \Rightarrow \left\{
				\begin{array}{l}
						z - \sum_{i=1}^m L_i^* v_i \in A x \\
						v_i \in \left( B_i \Box D_i \right)\left(L_i x - r_i\right), i=1,...,m
				\end{array}\right. \notag\\
	&\Leftrightarrow (x,v_1,...,v_m) \text{ is a primal-dual solution to Problem }\ref{p1}. \label{optcond2-p}
\end{align}}

Finally, we introduce the bounded linear operator
\begin{align*}
	\f V : \fK &\rightarrow \fK \\
	(x,\f y, \f v) &\mapsto \left(\frac{x}{\tau} - \sum_{i=1}^m L_i^* v_i,\frac{y_1}{\gamma_1} +v_1,\ldots,\frac{y_m}{\gamma_m} +v_m, \frac{v_1}{\sigma_1} - L_1x +y_1, \ldots, \frac{v_m}{\sigma_m} - L_m x + y_m \right),
\end{align*}
which is self-adjoint, i.\,e. $\f V^* = \f V$. Furthermore, the operator $\f V$ is $\rho$-strongly positive for
$$\rho =\left(1- 2\sqrt{\tau \sum_{i=1}^m \sigma_i \|L_i\|^2}\right)\min\left\{\frac{1}{\tau},\frac{1}{\gamma_1},\ldots,\frac{1}{\gamma_m},\frac{1}{\sigma_1},\ldots,\frac{1}{\sigma_m}\right\},$$
which is a positive real number due to the assumption
\begin{align}
	\tau \sum_{i=1}^m \sigma_i \|L_i\|^2 < \frac{1}{4}
\end{align}
made in Algorithm \ref{alg2}. Indeed, for $\gamma_i=\sigma_i^{-1}\tau\sum_{i=1}^m\sigma_i\|L_i\|^2$ it yields for each $i=1,\ldots,m$,
\begin{align*}
	2\<L_i x-y_i,v_i\>_{\g_i} &\leq 2\|L_i\|\|x\|_{\h} \|v_i\|_{\g_i} + 2\|y_i\|_{\g_i} \|v_i\|_{\g_i} \\
	&\leq \frac{\sigma_i \|L_i\|^2\|x\|_{\h}^2}{\sqrt{\tau \sum_{i=1}^m \sigma_i \|L_i\|^2}}  + \sqrt{\tau \sum_{i=1}^m \sigma_i \|L_i\|^2} \frac{\|y_i\|_{\g_i}^2}{\gamma_i} + 2 \sqrt{\tau \sum_{i=1}^m \sigma_i \|L_i\|^2} \frac{\|v_i\|_{\g_i}^2}{\sigma_i}
\end{align*}
and, consequently, for each $\f x=(x,\f y, \f v)\in\fK$, it follows that
\begin{align}
 \< \f x, \f V \f x \>_{\fK} &
= \frac{\|x\|_{\h}^2}{\tau} + \sum_{i=1}^m \left [\frac{\|y_i\|_{\g_i}^2}{\gamma_i} + \frac{\|v_i\|_{\g_i}^2}{\sigma_i}\right] - 2\sum_{i=1}^m \<L_i x-y_i,v_i \>_{\g_i} \notag \\	
&\geq \left(1- 2\sqrt{\tau \sum_{i=1}^m \sigma_i \|L_i\|^2}\right)\min\left\{\frac{1}{\tau},\frac{1}{\gamma_1},\ldots,\frac{1}{\gamma_m},\frac{1}{\sigma_1},\ldots,\frac{1}{\sigma_m}\right\} \|\f x\|_{\fK}^2 \notag \\
&=\rho \|\f x\|_{\fK}^2.
\end{align}

The algorithmic scheme \eqref{A2} is equivalent to
\begin{align}
	\label{A2.1}
		\left(\forall n\geq 0\right) \begin{array}{l} \left\lfloor \begin{array}{l}
		\frac{x_n - p_{1,n}}{\tau} - \sum_{i=1}^m L_i^* v_{i,n} \in -z + A(p_{1,n}-a_n) -\frac{a_n}{\tau} \\
		x_{n+1} = x_n + \lambda_n ( p_{1,n} - x_n ) \\
		\text{For }i=1,\ldots,m  \\
		\left\lfloor \begin{array}{l}
					\frac{y_{i,n}-p_{2,i,n}}{\gamma_i} + v_{i,n} \in D_i(p_{2,i,n}-d_{i,n}) - \frac{d_{i,n}}{\gamma_i} \\
					y_{i,n+1} = y_{i,n} + \lambda_n (p_{2,i,n} - y_{i,n})\\
					\frac{v_{i,n}-p_{3,i,n}}{\sigma_i} -L_i(x_n - p_{1,n}) + y_{i,n} - p_{2,i,n}  \\ \qquad \qquad \qquad \in r_i + B_i^{-1}(p_{3,i,n}-b_{i,n}) -L_ip_{1,n} +p_{2,i,n} -\frac{b_{i,n}}{\sigma_i} \\
					v_{i,n+1} = v_{i,n} + \lambda_n (p_{3,i,n} - v_{i,n}).
				\end{array} \right.	
		\end{array}
		\right.
		\end{array}
\end{align}
We introduce for every $n \geq 0$ the following notations:
\begin{align}
 \left\{
	\begin{array}{l}
	\fx_n = (x_n,y_{1,n},\ldots,y_{m,n},v_{1,n},\ldots,v_{m,n}) \\
	\fp_n = (p_{1,n},p_{2,1,n},\ldots,p_{2,m,n},p_{3,1,n},\ldots,p_{3,m,n}) \\
	\f a_n = (a_n,d_{1,n},\ldots,d_{m,n},b_{1,n},\ldots,b_{m,n}) \\
	\f a_n^{\tau} = (\frac{a_n}{\tau},\frac{d_{1,n}}{\gamma_1},\ldots,\frac{d_{m,n}}{\gamma_m},\frac{b_{1,n}}{\sigma_1},\ldots,\frac{b_{m,n}}{\sigma_m}).
	\end{array}\right.
\end{align}
Hence, the scheme \eqref{A2.1} can equivalently be written in the form
\begin{align}
	\label{A2.2}
		\left(\forall n\geq 0\right)  \left\lfloor \begin{array}{l}
		\f V(\f x_n-\f p_n) \in \left( \f S + \f M \right) \left(\fp_n - \f a_n\right) + \f S \f a_n - \f a_n^{\tau} \\
		\f x_{n+1} = \f x_n + \lambda_n ( \f p_n - \f x_n).
		\end{array}
		\right.
\end{align}

Considering again the Hilbert space $\fK_{\f V}$ with inner product and norm respectively defined as in \eqref{HSKV}, since $\f V$ is self-adjoint and $\rho$-strongly positive,  weak and strong convergence in $\fK_{\f V}$ are equivalent with weak and strong convergence in $\fK$, respectively. Moreover,
$\f A=\f V^{-1}\left( \f S + \f M \right)$ is maximally monotone on $\fK_{\f V}$. Thus, by denoting $\f e_n = \f V^{-1}\left(\left(\f S + \f V\right)\f a_n - \f a_n^{\tau}\right)$ for every $n \geq 0$ the iterative scheme \eqref{A2.2} becomes
 \begin{align}
	\label{A2.3}
		\left(\forall n\geq 0\right)  \left\lfloor \begin{array}{l}
	\fp_n = J_{\f A}\left(\fx_n - \f e_n\right) + \f a_n\\
	\f x_{n+1} = \f x_n + \lambda_n ( \f p_n - \f x_n).
	\end{array}
	\right.
\end{align}

Furthermore, introducing the maximal monotone operator $\f B : \fK \rightarrow 2^{\fK}$, $\fx \mapsto \{0\},$ and defining for every $n \geq 0$
$$\f \alpha_n = J_{\f A}\left(\fx_n - \f e_n\right) - J_{\f A}\left(\fx_n\right) +  \f a_n,$$
the iterative scheme \eqref{A2.3} becomes (notice that $J_{\f B}=\id$)
\begin{align}
	\label{A2.4}
	\left(\forall n\geq 0\right)  \left\lfloor \begin{array}{l}
	\fy_n = J_{\f B}(\fx_n) \\
	\fp_n = J_{\f A}\left(2\fy_n - \fx_n\right) + \f \alpha_n \\
	\fx_{n+1} = \fx_n + \lambda_n(\fp_n-\fy_n)	
	\end{array}
	\right.,
\end{align}
thus, it has the structure of the error-tolerant Douglas-Rachford algorithm from \cite{Com09}. Obviously, $\zer(\f A + \f B) = \zer(\f M + \f S)$.

\ref{th2.01} The assumptions made on the error sequences yield
\begin{align*}
	\sum_{n=0}^{+\infty} \lambda_n \|\f a_n\|_{\fK} < +\infty \ \mbox{and} \  \sum_{n=0}^{+\infty} \lambda_n\|\f e_n\|_{\fK} < +\infty,
\end{align*}
thus, by the nonexpansiveness of the resolvent of $\f A$,
\begin{align*}
	\sum_{n=0}^{+\infty} \lambda_n \|\f \alpha_n\|_{\fK}  < +\infty
\end{align*}
and, consequently, by the linearity and boundedness of $\f V$,
\begin{align*}
	\sum_{n=0}^{+\infty} \lambda_n \|\f \alpha_n\|_{\fK_{\f V}}  < +\infty.
\end{align*}

\ref{th2.01}\ref{th2.1} Follows directly from \cite[Theorem 2.1(i)(a)]{Com09} by using that $J_{\f B}=\id$ and relation \eqref{optcond2-p}.

\ref{th2.01}\ref{th2.2} Follows in analogy to the proof of Theorem \ref{thm1}\ref{th1.01}\ref{th1.2}.

\ref{th2.01}\ref{th2.3} Follows from Theorem \ref{thm2}\ref{th2.01}\ref{th2.1}.

\ref{th2.02} The iterative scheme \eqref{A2.3} can be also formulated as
\begin{align}
	\label{A2.5}
	\left(\forall n\geq 0\right)  \left\lfloor \begin{array}{l}
	\fp_n = J_{\f A}(\fx_n) + \f \alpha_n \\
	\fy_n = J_{\f B}\left(2\fp_n - \fx_n\right)\\
	\fx_{n+1} = \fx_n + \lambda_n(\fy_n-\fp_n)	
	\end{array}
	\right.,
\end{align}
with the error sequence fulfilling
\begin{align*}
	\sum_{n=0}^{+\infty} \|\f \alpha_n\|_{\fK_{\f V}}  < +\infty.
\end{align*}
The statement follows from \cite[Theorem 2.1(ii)(b)]{Com09} by taking into consideration the uniform monotonicity of $\f A$ and relation \eqref{optcond2-p}.
\end{proof}

\begin{remark}\label{rm2.1}
	When for every $i=1,...,m$ one takes $D_i(0) = \g_i$ and $D_i(v) = \varnothing \ \forall v \in \g_i \setminus \{0\}$, and $(d_{i,n})_{n \geq 0}$  as a sequence of zeros, one can show that the assertions made in Theorem \ref{thm2} hold true for step length parameters satisfying
	\begin{align*}
		\tau \sum_{i=1}^m \sigma_i \|L_i\|^2 < 1,
	\end{align*}
when choosing $(y_{1,0},\ldots,y_{m,0})=(0,\ldots,0)$ in Algorithm \ref{alg2}, since the sequences $(y_{1,n},\ldots,y_{m,n})_{n \geq 0}$
and $(v_{1,n},\ldots,v_{m,n})_{n \geq 0}$ vanish in this particular situation.
\end{remark}
\begin{remark}\label{rm2.2}
 In the following we emphasize the relations between Algorithm \ref{alg2} and other existent primal-dual iterative schemes.	
	\begin{enumerate}[label={(\roman*)}]
		\setlength{\itemsep}{-2pt}
		\item \label{rm2.2.1}  When for every $i=1,...,m$ one takes $D_i(0) = \g_i$ and $D_i(v) = \varnothing \ \forall v \in \g_i \setminus \{0\}$, and $(d_{i,n})_{n \geq 0}$ as a sequence of zeros, Algorithm \ref{alg2} with $(y_{1,0},\ldots,y_{m,0})=(0,\ldots,0)$ as initial choice provides an iterative scheme which is identical to the one in \cite[Eq. (3.3)]{Vu11}, but differs from the one in  \cite[Theorem 3.1]{ComPes12} (see, also, \cite[Theorem 3.1]{BriCom11} for the case $m=1$) when  the latter are applied to Problem \ref{p1}.
       \item \label{rm2.2.4} When solving the particular case of a primal-dual pair of convex optimization problems discussed in Example \ref{ex1} and when considering as initial choice  $y_{1,0}=0$, Algorithm \ref{alg2} gives rise to an iterative scheme which is equivalent to \cite[Algorithm 3.1]{Con12}. In addition, under the assumption of exact implementations, the method in Algorithm \ref{alg2} equals the one in \cite[Algorithm 1]{ChaPoc11}, our choice of $(\lambda_n)_{n \geq 0}$ to be variable in the interval $(0,2)$, however, relaxes the assumption in \cite{ChaPoc11} that $(\lambda_n)_{n \geq 0}$ is a constant sequence in  $(0,1]$.
\end{enumerate}
\end{remark}

\section{Application to convex minimization problems}\label{sectionMin}
In this section we particularize the two iterative schemes introduced and investigated in this paper in the context of solving a primal-dual pair of convex optimization problems. To this end we consider the following problem.
\begin{problem}\label{p1.2}
Let $\h$ and $\g_i$, $i=1,...,m$, be given real Hilbert spaces, $f \in \Gamma(\h)$ and $z \in \h$,  $g_i, l_i \in \Gamma(\g_i)$, $r_i \in \g_i$, $i=1,...,m$, and $L_i:\h \rightarrow \g_i$, $i=1,...,m,$ nonzero bounded linear operators. Consider the convex optimization problem
\begin{align}
	\label{opt-mp}
	(P) \quad \inf_{x \in \h}{\left\{f(x)+\sum_{i=1}^m (g_i \Box l_i)(L_ix-r_i) -\<x,z\>\right\}}
\end{align}
and its conjugate dual problem
\begin{align}
	\label{opt-md}
	(D) \quad \sup_{(v_1,\ldots,v_m) \in \g_1\times\ldots\times\g_m}{\left\{-f^*\left( z - \sum_{i=1}^m L_i^*v_i\right) - \sum_{i=1}^m \left( g_i^*(v_i) + l_i^*(v_i) + \< v_i,r_i \> \right) \right\} }.
\end{align}
\end{problem}
Considering the maximal monotone operators
\begin{align*}
	 A=\partial f, \ B_i = \partial g_i \text{ and }	D_i = \partial l_i,\ i=1,...,m,
\end{align*}
the monotone inclusion problem \eqref{opt-p} reads
\begin{align}
	\label{opt-po}
	\text{find }\bx \in \h \text{ such that } z \in \partial f(\bx) + \sum_{i=1}^m L_i^* (\partial g_i \Box \partial l_i)(L_i \bx-r_i),
\end{align}
while the dual inclusion problem \eqref{opt-d} reads
\begin{align}
	\label{opt-do}
	\text{find }\bv_1 \in \g_1,\ldots,\bv_m \in \g_m \text{ such that }(\exists x\in\h)\left\{
	\begin{array}{l}
		z - \sum_{i=1}^m L_i^*\bv_i \in \partial f(x) \\
		\!\!\bv_i \in \!\! (\partial g_i \Box \partial l_i)(L_ix-r_i), \,i=1,\ldots,m.
	\end{array}
\right.
\end{align}
If $(\bx, \bv_1,\ldots,\bv_m) \in \h \times \g_1 \ldots \times \g_m$ is a primal-dual solution to \eqref{opt-po}-\eqref{opt-do}, namely, \begin{equation}\label{operator-proof-conditions-full-0}
z - \sum_{i=1}^m L_i^*\bv_i \in \partial f(\bx) \ \mbox{and} \  \bv_i \in (\partial g_i \Box \partial l_i)(L_i\bx-r_i), \,i=1,\ldots,m,
\end{equation}
then $\bx$ is an optimal solution to $(P)$, $(\bv_1,\ldots,\bv_m)$ is an optimal solution to $(D)$ and the optimal objective values of the two problems, which we denote by $v(P)$ and $v(D)$, respectively, coincide (thus, strong duality holds).

Combining this statement with Algorithm \ref{alg1} and Theorem \ref{thm1} give rise to the following iterative scheme and corresponding convergence results for
the primal-dual pair of optimization problems $(P)-(D)$. We also use that the subdifferential of a uniformly convex function is uniformly monotone (cf. \cite[Example 22.3(iii)]{BauCom11}).

\begin{algorithm}\label{alg3} \text{ }\newline
Let $x_0 \in \h$, $(v_{1,0}, \ldots, v_{m,0}) \in \g_1 \times \ldots \times \g_m$ and $\tau$ and $\sigma_i$, $i=1,...,m,$ be strictly positive real numbers such that
$$\tau \sum_{i=1}^m \sigma_i \|L_i\|^2 < 4. $$
Furthermore, let $(\lambda_n)_{n\geq 0}$ be a sequence in $(0,2)$, $(a_n)_{n\geq 0}$ a sequence in $\h$, $(b_{i,n})_{n\geq 0}$ and $(d_{i,n})_{n\geq 0}$ sequences in $\g_i$ for all $i=1,\ldots,m$ and set
	\begin{align}\label{A3}
	  \left(\forall n\geq 0\right) \begin{array}{l}  \left\lfloor \begin{array}{l}
		p_{1,n} = \Prox_{\tau f}\left( x_n - \frac{\tau}{2} \sum_{i=1}^m L_i^* v_{i,n} + \tau z \right) + a_n \\
		w_{1,n} = 2p_{1,n} - x_n \\
		\text{For }i=1,\ldots,m  \\
				\left\lfloor \begin{array}{l}
					p_{2,i,n} = \Prox_{\sigma_i g_i^*}\left(v_{i,n} +\frac{\sigma_i}{2} L_i w_{1,n} - \sigma_i r_i \right) + b_{i,n} \\
					w_{2,i,n} = 2 p_{2,i,n} - v_{i,n} \\
				\end{array} \right.\\
		z_{1,n} = w_{1,n} - \frac{\tau}{2} \sum_{i=1}^m L_i^* w_{2,i,n} \\
		x_{n+1} = x_n + \lambda_n ( z_{1,n} - p_{1,n} ) \\
		\text{For }i=1,\ldots,m  \\
				\left\lfloor \begin{array}{l}
					z_{2,i,n} = \Prox_{\sigma_i l_i^*}\left(w_{2,i,n} + \frac{\sigma_i}{2}L_i (2 z_{1,n} - w_{1,n}) \right) + d_{i,n} \\
					v_{i,n+1} = v_{i,n} + \lambda_n (z_{2,i,n} - p_{2,i,n}). \\
				\end{array} \right. \\ \vspace{-4mm}
		\end{array}
		\right.
		\end{array}
	\end{align}
\end{algorithm}

\begin{theorem}\label{thm3}
For Problem \ref{p1.2} suppose that
\begin{align}\label{zin1.2}
	z \in \ran \left( \partial f +  \sum_{i=1}^m L_i^*(\partial g_i \Box \partial l_i)(L_i \cdot -r_i) \right),
\end{align}
and consider the sequences generated by Algorithm \ref{alg3}.
\begin{enumerate}[label={(\roman*)}]
	\setlength{\itemsep}{-2pt}
		\item  \label{th3.01} If
$$\sum_{n=0}^{+\infty}\lambda_n\|a_n\|_{\h} < +\infty, \quad \sum_{n=0}^{+\infty}\lambda_n (\|d_{i,n}\|_{\g_i} + \|b_{i,n}\|_{\g_i} ) < + \infty, i=1,\ldots,m,$$ and
$\sum_{n=0}^{+\infty} \lambda_n (2 - \lambda_n) = +\infty,$
then
		\begin{enumerate}[label={(\roman*)}]
	\setlength{\itemsep}{-2pt}
		\item  \label{th3.1} $(x_n,v_{1,n},\ldots,v_{m,n})_{n\geq 0}$ converges weakly to a point $(\bx,\bv_1,\ldots,\bv_m)\in\h\times\g_1\times\ldots\times\g_m$ such that, when setting
		\begin{align*}
						\bp_1 &= \Prox\nolimits_{\tau f}\left( \bx - \frac{\tau}{2} \sum_{i=1}^m L_i^* \bv_i + \tau z \right), \\
						\text{and }\bp_{2,i} & =\Prox\nolimits_{\sigma_i g_i^*}\left(\bv_{i} +\frac{\sigma_i}{2} L_i (2\bp_1-\bx) -\sigma_i r_i \right) \ i =1,...,m,
		\end{align*}
$\bp_1$ is an optimal solution to $(P)$, $(\bp_{2,1},\ldots,\bp_{2,m})$ is an optimal solution to $(D)$ and $v(P) = v(D)$.
		\item  \label{th3.2}$\lambda_n (z_{1,n}-p_{1,n}) \rightarrow 0 \  (n \rightarrow +\infty)$ and  $\lambda_n(z_{2,i,n}-p_{2,i,n}) \rightarrow  0 \  (n \rightarrow +\infty)$ for $i=1,...,m$.
		\item  \label{th3.3} whenever $\h$ and $\g_i, i=1,...,m,$ are finite-dimensional Hilbert spaces, $a_n \rightarrow 0 \  (n \rightarrow +\infty)$ and $b_{i,n} \rightarrow 0 \  (n \rightarrow +\infty)$ for $i=1,...,m$, then $(p_{1,n})_{n \geq 0}$ converges strongly to an optimal solution to $(P)$ and  $(p_{2,1,n},\ldots,p_{2,m,n})_{n \geq 0}$ converges strongly to an optimal solution to $(D)$.
		\end{enumerate}
		\item  \label{th3.02}  If
$$\sum_{n=0}^{+\infty}\|a_n\|_{\h} < +\infty, \quad \sum_{n=0}^{+\infty} (\|d_{i,n}\|_{\g_i} + \|b_{i,n}\|_{g_i} ) < + \infty, i=1,\ldots,m, \quad \inf_{n\geq 0} \lambda_n > 0$$
$$\mbox{and} \ f \ \mbox{and} \ g_i^*, i=1,...,m, \ \mbox{are uniformly convex},$$
then $(p_{1,n})_{n \geq 0}$ converges strongly to an optimal solution to $(P)$, $(p_{2,1,n},\ldots,p_{2,m,n})_{n \geq 0}$ converges strongly to an optimal solution to $(D)$ and $v(P) = v(D)$.
\end{enumerate}
\end{theorem}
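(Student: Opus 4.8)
The plan is to obtain Theorem \ref{thm3} as a direct specialization of Theorem \ref{thm1}, taking the maximally monotone operators to be $A=\partial f$, $B_i=\partial g_i$ and $D_i=\partial l_i$ for $i=1,\ldots,m$. First I would check that, under this choice, Algorithm \ref{alg3} is literally Algorithm \ref{alg1}. This rests on the identity $J_{\gamma\partial h}=\Prox_{\gamma h}$ recorded in the preliminaries together with the standard relation $(\partial h)^{-1}=\partial h^*$ valid for $h\in\Gamma$ (cf. \cite{BauCom11}). Indeed, the latter gives $B_i^{-1}=\partial g_i^*$ and $D_i^{-1}=\partial l_i^*$, whence $J_{\tau A}=\Prox_{\tau f}$, $J_{\sigma_i B_i^{-1}}=\Prox_{\sigma_i g_i^*}$ and $J_{\sigma_i D_i^{-1}}=\Prox_{\sigma_i l_i^*}$. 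Substituting these resolvents into the update rules \eqref{A1} reproduces \eqref{A3} verbatim, including the positions of the error terms $a_n$, $b_{i,n}$, $d_{i,n}$.

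Next I would match the hypotheses. The range condition \eqref{zin1.2} is exactly condition \eqref{zin} of Theorem \ref{thm1} for the present operators, so no work is required there, and via the inclusion formulation \eqref{opt-po}--\eqref{opt-do} it guarantees that Problem \ref{p1} possesses a primal-dual solution. With the identifications above, Theorem \ref{thm1}\ref{th1.01} applies and yields the weak convergence of $(x_n,v_{1,n},\ldots,v_{m,n})_{n\geq0}$ to a point $(\bx,\bv_1,\ldots,\bv_m)$ such that $(\bp_1,\bp_{2,1},\ldots,\bp_{2,m})$---with $\bp_1$ and $\bp_{2,i}$ defined through the same prox expressions as in the algorithm---is a primal-dual solution of Problem \ref{p1}, together with the vanishing-increment conclusion \ref{th3.2} and the finite-dimensional strong-convergence conclusion of Theorem \ref{thm1}\ref{th1.3}.

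The only genuinely analytic content is the passage from a primal-dual solution of the inclusion to optimizers of $(P)$ and $(D)$ with $v(P)=v(D)$. This, however, is precisely the implication recorded just before the theorem, following from Fenchel--Rockafellar duality under the qualification conditions of \cite{Bot10,BGW09}: any tuple satisfying \eqref{operator-proof-conditions-full-0} has $\bx$ optimal for $(P)$, $(\bv_1,\ldots,\bv_m)$ optimal for $(D)$, and strong duality holds. Applying this to the primal-dual solution $(\bp_1,\bp_{2,1},\ldots,\bp_{2,m})$ produced above gives \ref{th3.1}, and likewise converts the finite-dimensional conclusion of Theorem \ref{thm1}\ref{th1.3} into \ref{th3.3}.

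For part \ref{th3.02} I would invoke the fact, cited before the algorithm, that the subdifferential of a uniformly convex function is uniformly monotone (cf. \cite[Example 22.3(iii)]{BauCom11}). Thus uniform convexity of $f$ makes $A=\partial f$ uniformly monotone and uniform convexity of $g_i^*$ makes $B_i^{-1}=\partial g_i^*$ uniformly monotone; these are exactly the hypotheses of Theorem \ref{thm1}\ref{th1.02}, whose strong-convergence conclusion for $(p_{1,n},p_{2,1,n},\ldots,p_{2,m,n})_{n\geq0}$ then transfers, through the same duality correspondence, to the asserted strong convergence to optimizers of $(P)$ and $(D)$ with $v(P)=v(D)$. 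Since every step is a specialization, there is no real obstacle beyond bookkeeping; the one point demanding care is that the subdifferential--conjugate identities are applied to functions in $\Gamma$, so that all resolvents are genuinely single-valued proximal mappings and the error sequences enter \eqref{A3} in the same slots as in \eqref{A1}.
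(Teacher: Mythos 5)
Your proposal is correct and is essentially the paper's own argument: the authors obtain Theorem \ref{thm3} precisely by specializing Theorem \ref{thm1} with $A=\partial f$, $B_i=\partial g_i$, $D_i=\partial l_i$ (so that $J_{\tau A}=\Prox_{\tau f}$, $J_{\sigma_i B_i^{-1}}=\Prox_{\sigma_i g_i^*}$, $J_{\sigma_i D_i^{-1}}=\Prox_{\sigma_i l_i^*}$), invoking the duality implication recorded before the theorem and the fact that subdifferentials of uniformly convex functions are uniformly monotone. The only cosmetic slip is attributing the implication ``primal-dual solution of the inclusion $\Rightarrow$ optimality and $v(P)=v(D)$'' to qualification conditions, which are not needed for that direction (only for the converse); this does not affect the validity of your argument.
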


Algorithm \ref{alg2} and Theorem \ref{thm2} give rise to the following iterative scheme and corresponding convergence results for
the primal-dual pair of optimization problems $(P)-(D)$.

\begin{algorithm}\label{alg4} \text{ }\newline
	Let $x_0 \in \h$, $(y_{1,0}, \ldots, y_{m,0}) \in \g_1 \times \ldots \times \g_m$, $(v_{1,0}, \ldots, v_{m,0}) \in \g_1 \times \ldots \times \g_m$, and $\tau$ and $\sigma_i$, $i=1,...,m,$ be strictly positive real numbers such that
$$	\tau \sum_{i=1}^m \sigma_i \|L_i\|^2 < \frac{1}{4}.$$
Furthermore, set $\gamma_i=\sigma_i^{-1}\tau\sum_{i=1}^m \sigma_i\|L_i\|^2$, $i=1,...,m$,  $(\lambda_n)_{n\geq 0}$ be a sequence in $(0,2)$, $(a_n)_{n\geq 0}$ a sequence in $\h$, $(b_{i,n})_{n\geq 0}$ and $(d_{i,n})_{n\geq 0}$ sequences in $\g_i$ for all $i=1,\ldots,m$ and set
	\begin{align}\label{A4}
	  \left(\forall n\geq 0\right) \begin{array}{l} \left\lfloor \begin{array}{l}
		p_{1,n} = \Prox_{\tau f}\left( x_n - \tau\left( \sum_{i=1}^m L_i^* v_{i,n} - z \right)\right) + a_n \\
		x_{n+1} = x_n + \lambda_n ( p_{1,n} - x_n ) \\
		\text{For }i=1,\ldots,m  \\
		\left\lfloor \begin{array}{l}
					p_{2,i,n} = \Prox_{\gamma_i l_i}\left(y_{i,n} + \gamma_i v_{i,n} \right) + d_{i,n} \\
					y_{i,n+1} = y_{i,n} + \lambda_n (p_{2,i,n} - y_{i,n})\\
					p_{3,i,n} = \Prox_{\sigma_i g_i^*}\left(v_{i,n} + \sigma_i \left( L_i (2p_{1,n} - x_n) - (2p_{2,i,n} - y_{i,n}) - r_i \right)\right) + b_{i,n} \\
					v_{i,n+1} = v_{i,n} + \lambda_n (p_{3,i,n} - v_{i,n}).
				\end{array} \right.	
		\end{array}
		\right.
		\end{array}
	\end{align}
\end{algorithm}

\begin{theorem}\label{thm4}
In Problem \ref{p1.2} suppose that
\begin{align}\label{zin2.2}
	z \in \ran \left( \partial f +  \sum_{i=1}^m L_i^*(\partial g_i \Box \partial l_i)(L_i \cdot -r_i) \right),
\end{align}
and consider the sequences generated by Algorithm \ref{alg4}.
\begin{enumerate}[label={(\roman*)}]
	\setlength{\itemsep}{-2pt}
		\item  \label{th4.01} If
$$\sum_{n=0}^{+\infty}\lambda_n\|a_n\|_{\h} < +\infty, \quad \sum_{n=0}^{+\infty}\lambda_n (\|d_{i,n}\|_{\g_i} + \|b_{i,n}\|_{\g_i} ) < + \infty, i=1,\ldots,m,$$ and
$\sum_{n=0}^{+\infty} \lambda_n (2 - \lambda_n) = +\infty,$
then
		\begin{enumerate}[label={(\alph*)}]
	\setlength{\itemsep}{-2pt}
		\item  \label{th4.1} $(x_n,y_{1,n},\ldots,y_{m,n},v_{1,n},\ldots,v_{m,n})_{n\geq 0}$ converges weakly to a point $(\bx,\by_1,...,\by_m,
\bv_1,...,\bv_m)\in\h\times\g_1 \times \ldots \times \g_m\times\g_1 \times \ldots \times \g_m$ such that such that $\bx$ is an optimal solution to $(P)$, $(\bv_1,...,\bv_m)$ is an optimal solution to $(D)$ and $v(P) = v(D)$.
		\item  \label{th4.2} $\lambda_n (p_{1,n}-x_{n}) \rightarrow 0 \  (n \rightarrow +\infty)$, $\lambda_n(p_{2,i,n}-y_{i,n}) \rightarrow 0 \ (n \rightarrow +\infty)$ and $\lambda_n(p_{3,i,n}-v_{i,n}) \rightarrow 0 \ (n \rightarrow +\infty)$ for $i=1,...,m$.
		\item  \label{th4.3}  whenever $\h$ and $\g_i$, $i=1,...,m,$ are finite-dimensional Hilbert spaces, $(x_n)_{n\geq 0}$ converges strongly to an optimal solution to $(P)$ and $(v_{1,n},\ldots,v_{m,n})_{n\geq 0}$ converges strongly to an optimal solution to $(D)$.
		\end{enumerate}
		\item  \label{th4.02} If
$$\sum_{n=0}^{+\infty}\|a_n\|_{\h} < +\infty, \quad \sum_{n=0}^{+\infty} (\|d_{i,n}\|_{\g_i} + \|b_{i,n}\|_{g_i} ) < + \infty, i=1,\ldots,m, \quad \inf_{n\geq 0} \lambda_n > 0$$
$$\mbox{and} \ f, l_i \ \mbox{and} \ g_i^*, i=1,...,m, \ \mbox{are uniformly convex},$$
then,  $(p_{1,n})_{n\geq 0}$ converges strongly to the unique optimal solution to $(P)$, $(p_{3,1,n},\ldots,p_{3,m,n})_{n \geq 0}$ converges strongly to the unique optimal solution of $(D)$ and $v(P) = v(D)$.
	\end{enumerate}
\end{theorem}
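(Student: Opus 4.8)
The plan is to obtain Theorem \ref{thm4} as a direct specialization of Theorem \ref{thm2} to the subdifferential setting, so that no fundamentally new convergence analysis is required. First I would set $A = \partial f$, $B_i = \partial g_i$ and $D_i = \partial l_i$ for $i = 1, \ldots, m$. Since $f \in \Gamma(\h)$ and $g_i, l_i \in \Gamma(\g_i)$, these operators are maximally monotone (cf. \cite[Theorem 3.2.8]{Zalinescu02}), hence Problem \ref{p1.2} is a genuine instance of Problem \ref{p1}, and the range condition \eqref{zin2.2} is literally \eqref{zin2}. The key identifications are $J_{\tau A} = \Prox_{\tau f}$ and $J_{\gamma_i D_i} = \Prox_{\gamma_i l_i}$, together with $(\partial g_i)^{-1} = \partial g_i^*$, which gives $J_{\sigma_i B_i^{-1}} = J_{\sigma_i \partial g_i^*} = \Prox_{\sigma_i g_i^*}$. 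Inserting these into \eqref{A2} reproduces \eqref{A4} verbatim, so the sequences generated by Algorithm \ref{alg4} coincide with those generated by Algorithm \ref{alg2} for the chosen operators, and every conclusion of Theorem \ref{thm2} transfers at once.

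Next I would translate the monotone-inclusion conclusions into statements about $(P)$ and $(D)$. For part \ref{th4.01}\ref{th4.1}, Theorem \ref{thm2}\ref{th2.01}\ref{th2.1} yields that the weak limit $(\bx, \bv_1, \ldots, \bv_m)$ is a primal-dual solution to Problem \ref{p1}, i.e. it satisfies \eqref{operator-proof-conditions-full-0}. As recorded in the paragraph preceding Theorem \ref{thm3}, under assumption \eqref{zin2.2} this entails that $\bx$ is optimal for $(P)$, that $(\bv_1, \ldots, \bv_m)$ is optimal for $(D)$, and that $v(P) = v(D)$. The remaining two assertions \ref{th4.2} and \ref{th4.3} follow immediately from Theorem \ref{thm2}\ref{th2.01}\ref{th2.2} and Theorem \ref{thm2}\ref{th2.01}\ref{th2.3}, since they concern only the vanishing of the increments of the generated sequences and strong convergence in finite dimension, both of which are insensitive to the specialization.

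For the strong-convergence statement \ref{th4.02} I would invoke that the subdifferential of a uniformly convex function is uniformly monotone (cf. \cite[Example 22.3(iii)]{BauCom11}). Thus uniform convexity of $f$, of $l_i$ and of $g_i^*$ is equivalent to uniform monotonicity of $A = \partial f$, of $D_i = \partial l_i$ and of $B_i^{-1} = \partial g_i^*$, respectively, which is exactly the hypothesis of Theorem \ref{thm2}\ref{th2.02}. That part then delivers strong convergence of $(p_{1,n}, p_{3,1,n}, \ldots, p_{3,m,n})_{n \geq 0}$ to the unique primal-dual solution of Problem \ref{p1}; passing once more through \eqref{operator-proof-conditions-full-0} gives strong convergence of $(p_{1,n})_{n\geq 0}$ to the unique optimal solution of $(P)$ and of $(p_{3,1,n}, \ldots, p_{3,m,n})_{n\geq 0}$ to the unique optimal solution of $(D)$, with $v(P) = v(D)$.

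In short, the argument is a matter of careful bookkeeping rather than of overcoming a genuine analytical obstacle: the heavy lifting has already been done in Theorem \ref{thm2}. The only point demanding attention is the equivalence between primal-dual solutions of the monotone inclusion \eqref{opt-po}-\eqref{opt-do} and optimal solutions of the convex pair $(P)$-$(D)$, but this rests on Fenchel duality under the qualification condition \eqref{zin2.2} and is already available as a standing fact. Consequently the main subtlety reduces to applying the resolvent identities $J_{\gamma \partial f} = \Prox_{\gamma f}$ and $J_{\sigma_i (\partial g_i)^{-1}} = \Prox_{\sigma_i g_i^*}$ consistently, so that Algorithm \ref{alg4} is correctly recognized as Algorithm \ref{alg2} in disguise.
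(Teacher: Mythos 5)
Your proposal is correct and follows essentially the same route as the paper: the authors obtain Theorem \ref{thm4} precisely by specializing Algorithm \ref{alg2} and Theorem \ref{thm2} to $A=\partial f$, $B_i=\partial g_i$, $D_i=\partial l_i$, using $J_{\gamma\partial f}=\Prox_{\gamma f}$, $(\partial g_i)^{-1}=\partial g_i^*$, the strong-duality statement recorded in \eqref{operator-proof-conditions-full-0}, and the fact that subdifferentials of uniformly convex functions are uniformly monotone (\cite[Example 22.3(iii)]{BauCom11}). The only cosmetic remark is that you claim an \emph{equivalence} between uniform convexity and uniform monotonicity of the subdifferential, whereas only the implication from convexity to monotonicity is needed and cited.
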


\begin{remark}
\label{remark-alg4}
According to Remark \ref{rm2.1}, when $l_i:\g_i \rightarrow \oR$, $l_i=\delta_{\{0\}}$, and $(d_{i,n})_{n \geq 0}$ is chosen as a sequence of zeros for every $i =1,...,m$, the assertions made in Theorem \ref{thm4} hold true for step length parameters satisfying
	\begin{align*}
		\tau \sum_{i=1}^m \sigma_i \|L_i\|^2 < 1
	\end{align*}
when taking in Algorithm \ref{alg4} as initial choice $(y_{1,0},\ldots,y_{m,0})=(0,\ldots,0)$. In this case the sequences $(y_{1,n},\ldots,y_{m,n})_{n\geq 0}$ and $(p_{2,1,n},\ldots,p_{2,m,n})_{n\geq 0}$ vanish and \eqref{A4} reduces to
	\begin{align}\label{A4.2}
	  \left(\forall n\geq 0\right)
		\begin{array}{l} \left\lfloor
			\begin{array}{l}
				p_{1,n} = \Prox_{\tau f}\left( x_n - \tau\left( \sum_{i=1}^m L_i^* v_{i,n} - z \right)\right) + a_n \\
				x_{n+1} = x_n + \lambda_n ( p_{1,n} - x_n )\\
				\text{For }i=1,\ldots,m  \\
				\left\lfloor
				\begin{array}{l}
					p_{3,i,n} = \Prox_{\sigma_i g_i^*}\left(v_{i,n} + \sigma_i \left( L_i (2p_{1,n} - x_n) - r_i \right)\right) + b_{i,n} \\
					v_{i,n+1} = v_{i,n} + \lambda_n (p_{3,i,n} - v_{i,n}).
				\end{array} \right. \\ \vspace{-4mm}
			\end{array}	\right.
		\end{array}
	\end{align}
\end{remark}

\section{Numerical experiments}\label{sectionEx}

In this section we emphasize the performances of the algorithms introduced in this article in the context of two numerical experiments on location and image deblurring problems.

\subsection{The generalized Heron problem}\label{subsectionHeron}

We start by considering the \textit{generalized Heron problem} which has been recently investigated in \cite{MorNamSal12a,MorNamSal12b} and where for its solving
subgradient-type methods have been used.

While the \textit{classical Heron problem} concerns the finding of a point $\bu$ on a given straight line in the plane such that the sum of distances from $\bu$ to given points $u^1,\,u^2$ is minimal, the problem that we address here aims to find a point in a closed convex set $\Omega \subseteq \R^n$ which minimizes the sum of the distances to given convex closed sets $\Omega_i \subseteq \R^n$, $i=1,\ldots,m$.

The distance from a point $x \in \R^n$ to a nonempty set $\Omega \subseteq  \R^n$ is given by
\begin{align*}
	d(x;\Omega)=(\|\cdot\| \Box \delta_{\Omega})(x) = \inf_{z\in\Omega}\|x-z\|.
\end{align*}
Thus the \textit{generalized Heron problem} reads
\begin{align}
	\label{ex-p1}
	\inf_{x\in\Omega} \sum_{i=1}^m d(x;\Omega_i) ,
\end{align}
where the sets $\Omega \subseteq \R^n$ and $\Omega_i \subseteq \R^n$, $i=1,\ldots,m$, are nonempty, closed and convex. We observe that \eqref{ex-p1} perfectly fits into the framework considered in Problem \ref{p1.2} when setting
\begin{align}
	\label{ex-f1}
	f=\delta_{\Omega}, \text{ and } g_i=\|\cdot\|,\ l_i=\delta_{\Omega_i} \text{ for all }i=1,\ldots,m.
\end{align}
However, note that \eqref{ex-p1} cannot be solved via the primal-dual methods in \cite{ComPes12} and \cite{Vu11} since they require the presence of at least one strongly convex function (cf. Baillon-Haddad Theorem, \cite[Corollary 18.16]{BauCom11}) in each of the infimal convolutions $\|\cdot\| \Box \delta_{\Omega_i}$, $i=1,\ldots,m$, fact which is obviously not the case. Notice that
$$g_i^*:\R^n \rightarrow \oR,\ g_i^*(p)=\sup_{x\in\R^n}\left\{\<p,x\> - \|x\|\right\} = \delta_{B(0,1)}(p), i=1,...,m,$$ thus
the proximal points of $f$, $g_i^*$ and $l_i^*, i=1,...,m,$ can be calculated via projections, in case of the latter via Moreau's decomposition formula.

We test our algorithms on some examples taken from \cite{MorNamSal12a,MorNamSal12b} and denote in the final evaluations for every $k \geq 0$ by $V_k$ the value of the objective function of \eqref{ex-p1} at each iterate.

\begin{example}[Example 5.5 in \cite{MorNamSal12b}]\label{num-ex1}
Consider problem \eqref{ex-p1} with the constraint set $\Omega$ being the closed ball centered at $(5,0)$ having radius $2$ and the sets $\Omega_i$, $i=1,\ldots,8$, being pairwise disjoint squares in \textit{right position} in $\R^2$ (i.\,e. the edges are parallel to the x- and y-axes, respectively), with centers $(-2,4)$, $(-1,-8)$, $(0,0)$, $(0,6)$, $(5,-6)$, $(8,-8)$, $(8,9)$ and $(9,-5)$ and side length $1$, respectively (see Figure \ref{fig:ex1}).

When solving this problem with Algorithm \ref{alg3} and Algorithm \ref{alg4} and the choices made in \eqref{ex-f1}, the following formulae for the proximal points involved in their formulations are necessary:
\begin{align*}
	\Prox\nolimits_{\tau f}\left(x\right) &= (5,0) + \argmin_{y \in B(0,2)}\frac{1}{2}\left\|y-\left(x-(5,0)\right)\right\|^2 = (5,0) + \proj_{B(0,2)}\left(x-(5,0)\right) \\
	\Prox\nolimits_{\sigma_i g_i^*}(p) &= \argmin_{z \in B(0,1)} \frac{1}{2}\|z-p\|^2 = \proj_{B(0,1)}\left(p\right) \\
	\Prox\nolimits_{\sigma_i l_i^*}(p) &\overset{\mathclap{\eqref{res-indentity}}}{=} p - \sigma_i \Prox\nolimits_{\sigma_i^{-1} l_i}\left(\frac{p}{\sigma_i}\right) = p - \sigma_i \argmin_{z\in\Omega_i} \frac{1}{2}\left\|z-\frac{p}{\sigma_i}\right\|^2= p - \sigma_i\proj_{\Omega_i}\left(\frac{p}{\sigma_i}\right).
\end{align*}

Hence, when choosing $x_0\in\R^2$ and $(v_{1,0},\ldots,v_{8,0})\in \R^2 \times \ldots \times \R^2$ as starting values, letting $(\lambda_n)_{n\geq 0} \subseteq (0,2)$ and $\tau,\sigma_i\in \R_{++}$, $i=1,\ldots,8$, such that $\tau \sum_{i=1}^8 \sigma_i < 4$, the iterative scheme Algorithm \ref{alg3} becomes
\begin{align*}
	  \left(\forall n\geq 0\right) \begin{array}{l}  \left\lfloor \begin{array}{l}
		p_{1,n} = (5,0) + \proj_{B(0,2)}\left(x_n - \frac{\tau}{2} \sum_{i=1}^8 v_{i,n}-(5,0)\right)\\
		w_{1,n} = 2p_{1,n} - x_n \\
		\text{For }i=1,\ldots,8  \\
				\left\lfloor \begin{array}{l}
					p_{2,i,n} = \proj_{B(0,1)}\left(v_{i,n} +\frac{\sigma_i}{2} w_{1,n} \right)  \\
					w_{2,i,n} = 2 p_{2,i,n} - v_{i,n} \\
				\end{array} \right.\\
		z_{1,n} = w_{1,n} - \frac{\tau}{2} \sum_{i=1}^8 w_{2,i,n} \\
		x_{n+1} = x_n + \lambda_n ( z_{1,n} - p_{1,n} ) \\
		\text{For }i=1,\ldots,8  \\
				\left\lfloor \begin{array}{l}
					z_{2,i,n} = w_{2,i,n} + \frac{\sigma_i}{2} (2 z_{1,n} - w_{1,n}) - \sigma_i\proj_{\Omega_i}\left(\frac{w_{2,i,n}}{\sigma_i}+ \frac{1}{2} (2 z_{1,n} - w_{1,n}) \right)   \\
					v_{i,n+1} = v_{i,n} + \lambda_n (z_{2,i,n} - p_{2,i,n}). \\
				\end{array} \right. \\ \vspace{-4mm}
		\end{array}
		\right.
		\end{array}
	\end{align*}
For the same initial choices, but when  $\tau \sum_{i=1}^8 \sigma_i < \frac{1}{4}$, the iterative scheme in Algorithm \ref{alg4} reads
	\begin{align*}
	  \left(\forall n\geq 0\right) \begin{array}{l} \left\lfloor \begin{array}{l}
		p_{1,n} = (5,0) + \proj_{B(0,2)}\left(x_n - \tau\sum_{i=1}^8 v_{i,n}-(5,0)\right)   \\
		x_{n+1} = x_n + \lambda_n ( p_{1,n} - x_n ) \\
		\text{For }i=1,\ldots,8  \\
		\left\lfloor \begin{array}{l}
					p_{2,i,n} = \proj_{\Omega_i}\left(y_{i,n} + \gamma_i v_{i,n}\right)  \\
					y_{i,n+1} = y_{i,n} + \lambda_n (p_{2,i,n} - y_{i,n})\\
					p_{3,i,n} = \proj_{B(0,1)}\left(v_{i,n} + \sigma_i \left( (2p_{1,n} - x_n) - (2p_{2,i,n} - y_{i,n}) \right) \right)  \\
					v_{i,n+1} = v_{i,n} + \lambda_n (p_{3,i,n} - v_{i,n}).
				\end{array} \right.	
		\end{array}
		\right.
		\end{array}
	\end{align*}	

\begin{figure}[htb]
	\begin{minipage}[t]{0.4\textwidth}
	\centering
	\includegraphics*[viewport= 156 188 448 585, width=0.8\textwidth]{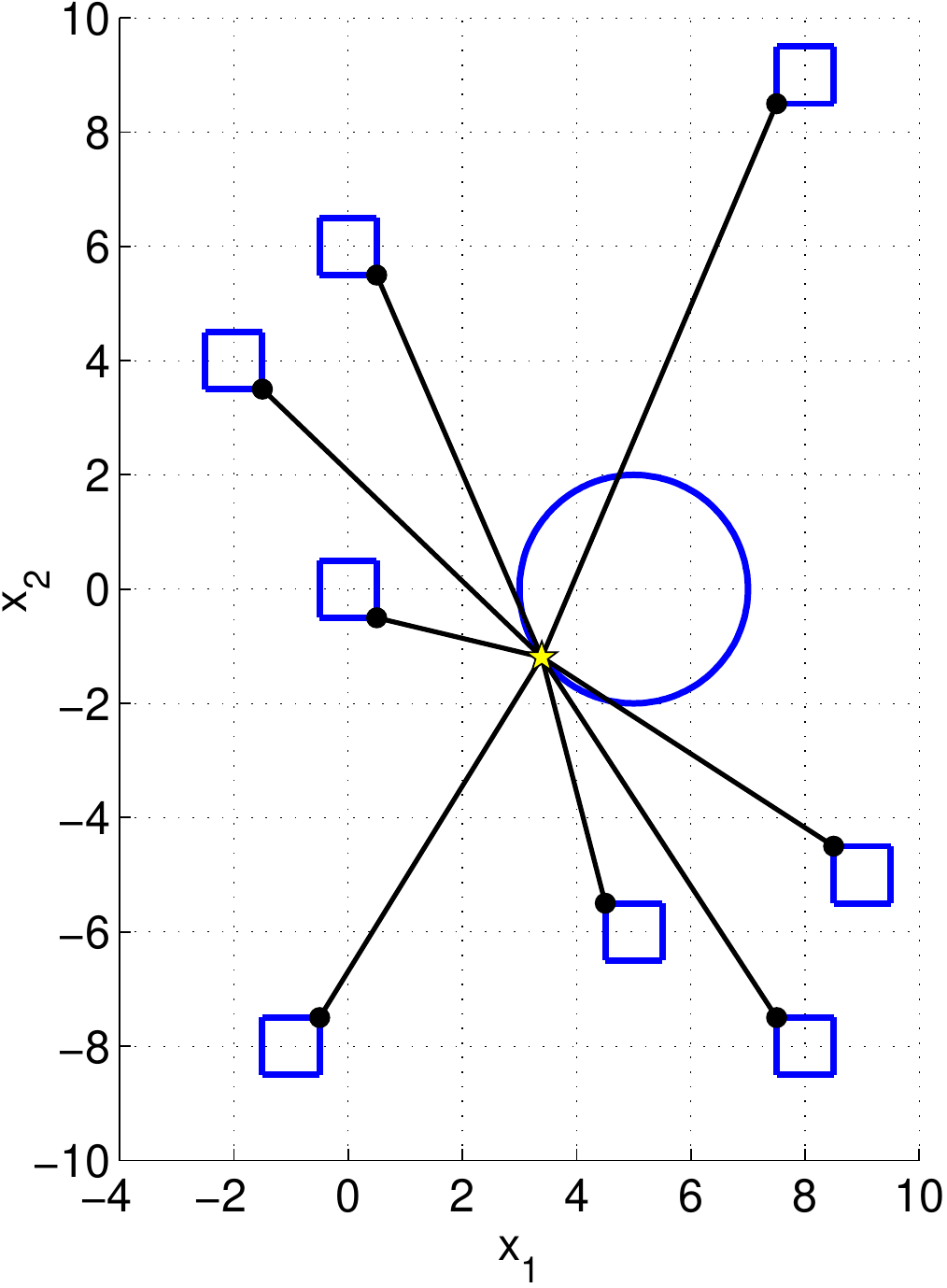}
	\end{minipage}
	\begin{minipage}[t]{0.55\textwidth}
		\centering
		\raisebox{\depth + 3mm}{\small
		\begin{tabular}[htbp]{|c|c|}
				\hline
				\multicolumn {2}{|c|}{Initializations}\\
				\hline
				Algorithm \ref{alg3} & $\sigma_i=0.5$, $\tau=0.24$, \\
				& $\lambda_n=1.8$, $x_0=(5,2)$, \\
			  & $v_{i,0}=0$, $i=1,\ldots,8$ \\ \hline \hline
				Algorithm \ref{alg4} & $\sigma_i=0.1$, $\tau=0.24$, \\
				& $\lambda_n=1.8$, $x_0=(5,2)$, \\
			  & $v_{i,0}=0$, $i=1,\ldots,8$ \\
			\hline
	\end{tabular}
		}
	\end{minipage}
	\caption{\small Example \ref{num-ex1}. Generalized Heron problem with squares and disc constraint set on the left hand side, initializations for the iterative schemes on the right hand side.}
	\label{fig:ex1}	
\end{figure}

\begin{center}
	\small
	\begin{tabular}{| c || c | c | c | c  | } \hline
				& \multicolumn{2}{c|}{Algorithm \ref{alg3}} & \multicolumn{2}{c|}{Algorithm \ref{alg4}} \\\cline{2-3}\cline{4-5}
				$k$   &  $p_{1,k}$ & $V_k$  &  $p_{1,k}$ & $V_k$ \\ \hline \hline
			$0$   &  $(5,-2)$  &  $54.418914$ &  $(5,-2)$ & $54.418914$ \\
			$5$   &  $(3.344027,-1.121496)$  & $53.046330$  & $(3.809999,-1.607451)$ & $53.174978$\\
			$10$  &  $(3.389398,-1.185733)$  & $53.043638$	& $(3.441673,-1.253641)$ & $53.046054$ \\
			$20$  &  $(3.392361,-1.189747)$  & $53.043627$  & $(3.392712,-1.190221)$ & $53.043627$ \\
			$50$  &  $(3.392688,-1.190188)$  & $53.043627$  & $(3.392688,-1.190188)$ & $53.043627$ \\
			$10^6$ & $(3.392688,-1.190188)$  & $53.043627$  & $(3.392688,-1.190188)$ & $53.043627$ \\ \hline
		\end{tabular}
\end{center}

As pointed out in the above table, our methods need less than $50$ iterations to obtain an accuracy up to $6$ decimal places for both function values and iterates. By comparison,
the subgradient-type method in \cite{MorNamSal12b}, when applied to this particular problem with the same starting point, requires more than a million iterations.
\end{example}

\begin{example}[Example 4.3 in \cite{MorNamSal12a}]\label{ex2}
In this example we solve the generalized Heron problem \eqref{ex-p1} in $\R^3$, where the constraint set $\Omega$ is the closed ball centered at $(0,2,0)$ with radius $1$ and $\Omega_i, i=1,...,5,$ are cubes in right position with center at $(0,-4,0)$, $(-4,2,-3)$, $(-3,-4,2)$, $(-5,4,4)$ and $(-1,8,1)$ and side length  $2$, respectively. The plotted  and the initializations for Algorithm \ref{alg3} and Algorithm \ref{alg4} are shown in Figure \ref{fig:ex2}.
\begin{figure}[htb]
	\begin{minipage}[t]{0.4\textwidth}
	\centering
	\includegraphics*[viewport= 71 210 544 581, width=1\textwidth]{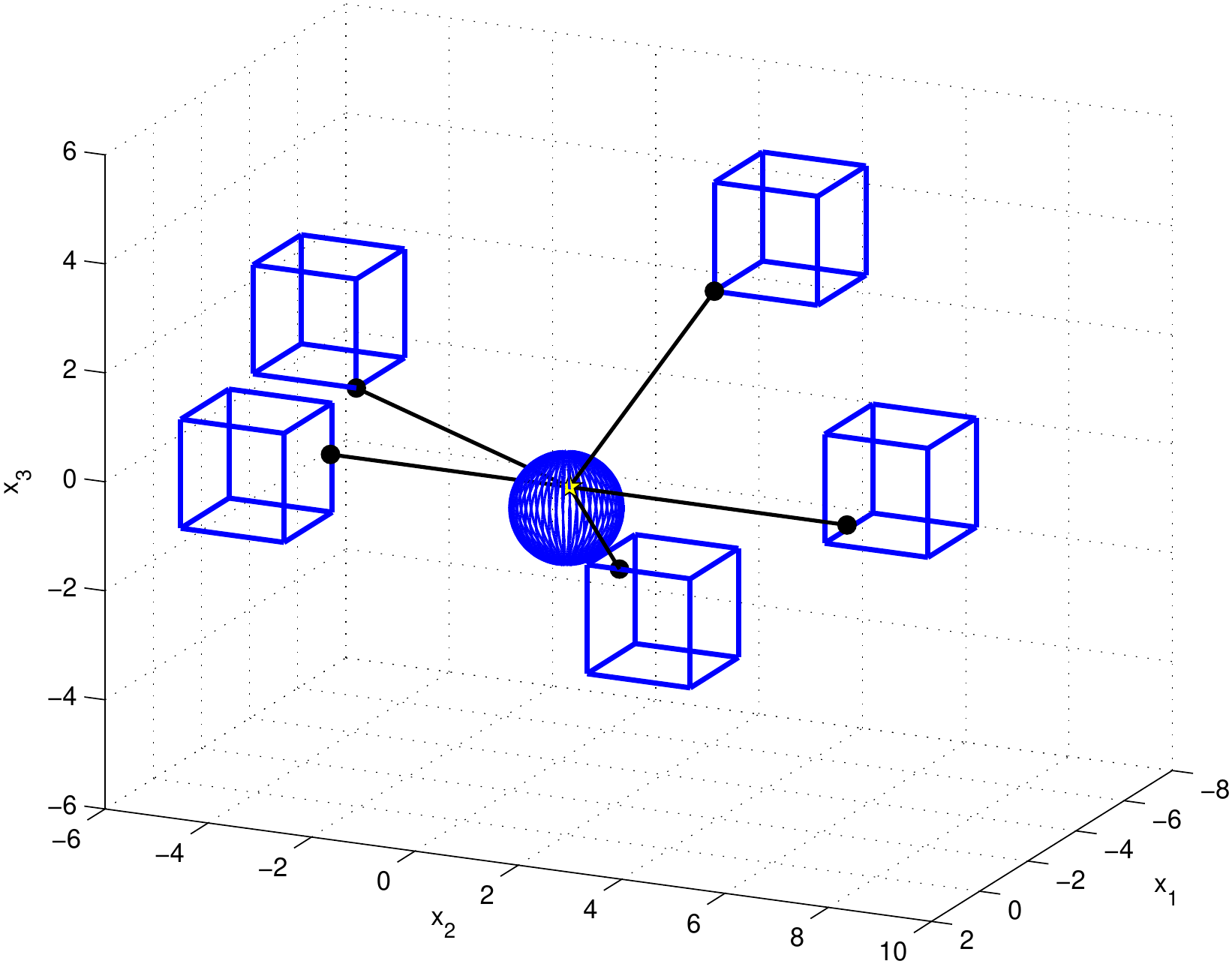}
	\end{minipage}
	\begin{minipage}[t]{0.55\textwidth}
		\centering
		\raisebox{\depth + 3mm}{\small
		\begin{tabular}[htbp]{|c|c|}
				\hline
				\multicolumn {2}{|c|}{Initializations}\\
				\hline
				Algorithm \ref{alg3} & $\sigma_i=0.4$, $\tau=0.99$, \\
				& $\lambda_n=1.8$, $x_0=(0,2,0)$ \\
			  & $v_{i,0}=0$, $i=1,\ldots,8$ \\ \hline \hline
				Algorithm \ref{alg4} & $\sigma_i=0.05$, $\tau=0.59$, \\
				& $\lambda_n=1.8$, $x_0=(0,2,0)$, \\
			  & $v_{i,0}=0$, $i=1,\ldots,8$ \\
			\hline
	\end{tabular}
		}
	\end{minipage}
	\caption{\small Example \ref{ex2}. Generalized Heron problem with cubes and ball constraint set on the left hand side, initializations for the applied methods on the right hand side.}
	\label{fig:ex2}	
\end{figure}
\begin{center}
\small
\begin{tabular}{| c || c | c | c | c | } \hline
		  & \multicolumn{2}{c|}{Algorithm \ref{alg3}} & \multicolumn{2}{c|}{Algorithm \ref{alg4}} \\\cline{2-3}\cline{4-5}
		  $k$   &  $p_{1,k}$ & $V_k$  &  $p_{1,k}$ & $V_k$ \\ \hline \hline
		$0$  &  $(0,2,0)$  & $24.18180$  &  $(0,2,0)$ & $24.18180$ \\
		$5$  &  $(-0.92380,1.62587,0.08140)$  & $22.23482$  & $(-0.93595,1.66118,0.09588)$ & $22.23627$\\
		$10$  & $(-0.92525,1.62890,0.07875)$ &  $22.23480$  & $(-0.92561,1.62957,0.07762)$ & $22.23480$ \\
		$20$  & $(-0.92531,1.62907,0.07883)$   & $22.23480$ & $(-0.92520,1.62880,0.07882)$ & $22.23480$ \\
		$50$  & $(-0.92531,1.62907,0.07883)$  & $22.23480$  & $(-0.92531,1.62907,0.07883)$ & $22.23480$ \\
		$10^6$  & $(-0.92531,1.62907,0.07883)$ & $22.23480$ & $(-0.92531,1.62907,0.07883)$ & $22.23480$ \\ \hline
	\end{tabular}
\end{center}
According to the above table, both algorithms solve this problem in few steps. Especially Algorithm \ref{alg3} achieves accuracies up to $5$ decimal places for both objective function values and iterates in less than $20$ iterations.
\end{example}

\begin{example}[Example 5.4 in \cite{MorNamSal12b}]\label{num-ex3}
Consider again the generalized Heron problem \eqref{ex-p1} for squares in right position in $\R^2$, but this time subject to a real line. More concretely, we take $\Omega=\left\{(x_1,x_2) \in \R^2 : (x_1,x_2) = (1,6) + t(1,0),\ t\in\R \right\}$ and for $\Omega_i, i=1,...,5,$ the squares with center $(-6,-9)$, $(-5,4)$, $(0,-7)$, $(1,0)$ and $(8,8)$ and side length $2$. Figure \ref{fig:ex3} shows the plotted result and the initializations for Algorithm \ref{alg3} and Algorithm \ref{alg4}.
\begin{figure}[htb]
	\begin{minipage}[t]{0.40\textwidth}
	\centering
	\includegraphics*[viewport= 127 187 476 586, width=0.8\textwidth]{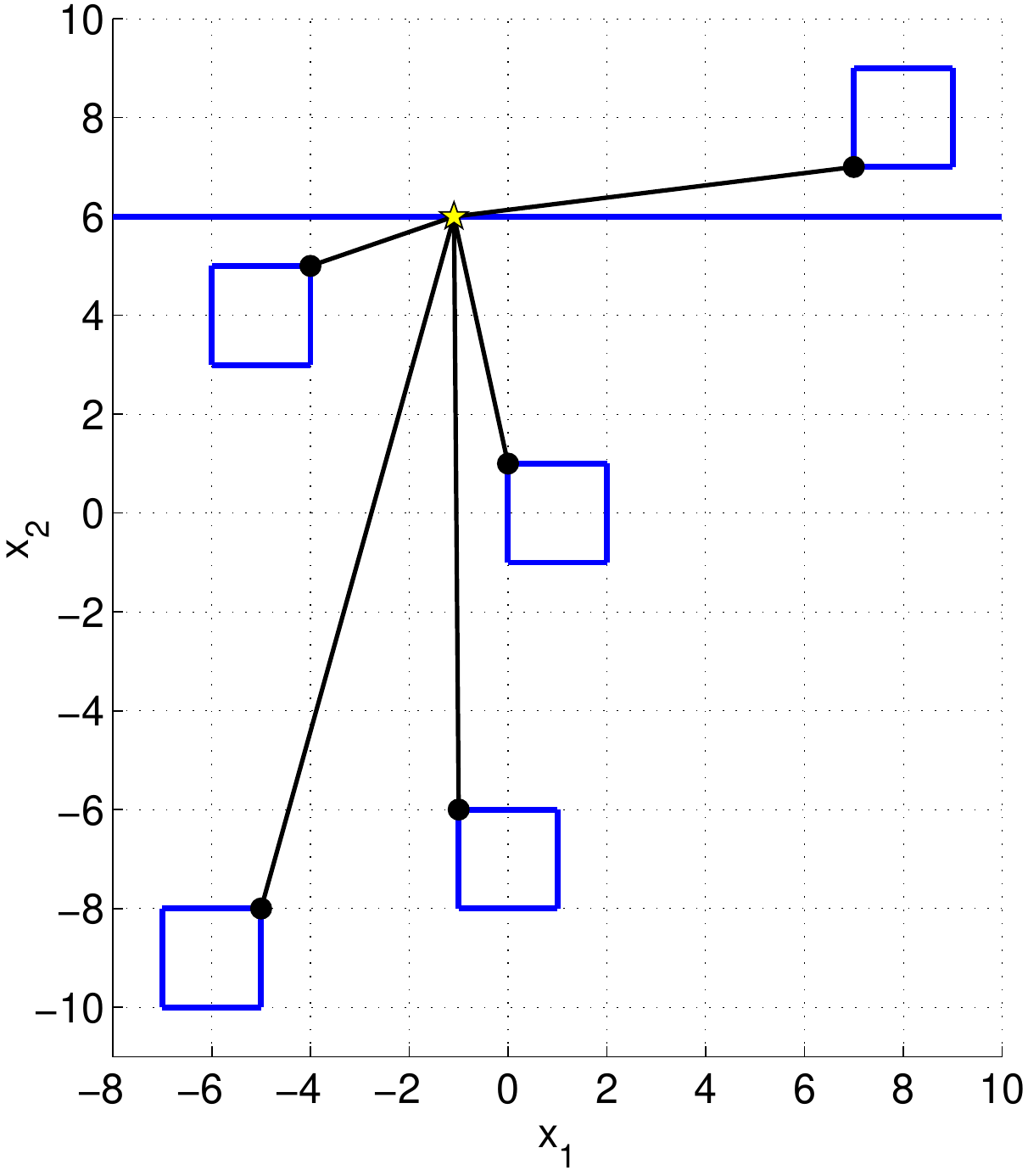}
	\end{minipage}
	\begin{minipage}[t]{0.55\textwidth}
		\centering
		\raisebox{\depth + 3mm}{\small
		\begin{tabular}[htbp]{|c|c|}
				\hline
				\multicolumn {2}{|c|}{Initializations}\\
				\hline
				Algorithm \ref{alg3} & $\sigma_i=0.1$, $\tau=3.99$, \\
				& $\lambda_n=1.7$, $x_0=(-1,6)$, \\
			  & $v_{i,0}=0$, $i=1,\ldots,8$ \\ \hline \hline
				Algorithm \ref{alg4} & $\sigma_i=0.1$, $\tau=0.49$, \\
				& $\lambda_n=1.7$, $x_0=(-1,6)$, \\
			  & $v_{i,0}=0$, $i=1,\ldots,8$ \\
			\hline
	\end{tabular}
		}
	\end{minipage}
	\caption{\small Example \ref{num-ex3}. Generalized Heron problem with squares and line constraint on the left hand side, initializations for the applied methods on the right hand side.}
	\label{fig:ex3}	
\end{figure}
\begin{center}
\small
\begin{tabular}{| c || c | c | c | c  | } \hline
		  & \multicolumn{2}{c|}{Algorithm \ref{alg3}} & \multicolumn{2}{c|}{Algorithm \ref{alg4}} \\\cline{2-3}\cline{4-5}
		  $k$   &  $p_{1,k}$ & $V_k$  &  $p_{1,k}$ & $V_k$ \\ \hline \hline
		$0$   &  $(-1,6)$  & $42.883775$ &  $(-1,6)$ & $42.883775$ \\
		$5$   &  $(-1.215422,6)$  & $42.884811$  & $(-1.136966,6)$ & $42.883775$\\
		$10$  &  $(-1.093321,6)$  & $42.882115$	 & $(-1.107478,6)$ & $42.882444$\\
		$20$  &  $(-1.094633,6)$  & $42.882115$  & $(-1.094886,6)$ & $42.882145$\\
		$50$  &  $(-1.094773,6)$  & $42.882115$  & $(-1.094773,6)$ & $42.882115$\\
		$10^6$ & $(-1.094773,6)$  & $42.882115$  & $(-1.094773,6)$ & $42.882115$ \\ \hline
	\end{tabular}
\end{center}
Algorithm \ref{alg3} and Algorithm \ref{alg4} achieve accuracies for both the values of the objective function and the iterates in less than $50$ iterations. By comparison, as shown in \cite[Example 5.4]{MorNamSal12b}, the subgradient-type methods are not able to provide these accuracies not even after $10$ million iterations.
\end{example}

\subsection{Image deblurring}\label{subsectionImageDeblur}
The second numerical experiment concerns the solving of an ill-conditioned linear inverse problem arising in image deblurring. To this end, we consider images of size $M \times N$ as vectors $x\in\R^n$ for $n=MN$, where each pixel denoted by $x_{i,j}$, $1\leq i \leq M$, $1\leq j \leq N$, ranges in the closed interval from $0$ (pure black) to $1$ (pure white). For a given matrix $A \in \mathbb{R}^{n \times n}$ describing a blur (or averaging) operator and a given vector $b \in \R^n$ representing the blurred and noisy image, our aim is to estimate the unknown original image $\bx\in\R^n$ fulfilling
$$A\bx=b.$$
To this aim we are solving the following regularized convex nondifferentiable problem
 \begin{align}\label{probimageproc}
\inf_{x \in \R^n}{\left\{ \left\| Ax-b \right\|_1 + \alpha_2 \left\| Wx \right\|_1 +\alpha_1 TV(x) + \delta_{\left[0,1\right]^n}(x) \right\}},
\end{align}
where the regularization is done by a combination of two functionals with different properties. Here, $\alpha_1,\,\alpha_2 \in \R_{++}$ are regularization parameters, $TV:\R^n \rightarrow \R$ is the discrete isotropic total variation function and $W:\R^n \rightarrow \R^n$ is the discrete Haar wavelet transform with four levels transforming the image into wavelet coefficients with respect to the orthonormal Haar wavelet basis. Notice that the norm of the operator $W$ is $\|W\|=2^{-8}$ and that none of the functions occurring in the objective function of \eqref{probimageproc} is differentiable.

The picture undergoes a Gaussian blur of size $9 \times 9$ with standard deviation $4$, as done in \cite[Section 4.2]{BotHend12}, yielding  a blurring operator $A$ with $\|A\|^2=1$ and $A^*=A$. For the discrete isotropic total variation functional
\begin{align*}
	TV(x) &= \sum_{i=1}^{M-1}\sum_{j=1}^{N-1}\sqrt{ (x_{i+1,j}-x_{i,j})^2 + (x_{i,j+1}-x_{i,j})^2 } \\
								&\quad + \sum_{i=1}^{M-1} \left| x_{i+1,N}-x_{i,N} \right|  + \sum_{j=1}^{N-1} \left| x_{M,j+1}-x_{M,j} \right|,
\end{align*}
where reflexive boundary conditions are assumed, it holds that $TV(x)=\|Lx\|_{\times}$, where $\Y=\R^n \times \R^n$, the operator $L:\R^n \rightarrow \Y$, $x_{i,j} \mapsto (L_1x_{i,j}, L_2x_{i,j})$,
\begin{align*}
	L_1x_{i,j} = \left\{ \begin{array}{ll} x_{i+1,j}-x_{i,j}, & \text{if }i<M\\ 0, &\text{if }i=M\end{array}\right. \ \mbox{and} \
	L_2x_{i,j} = \left\{ \begin{array}{ll} x_{i,j+1}-x_{i,j}, & \text{if }j<N\\ 0, &\text{if }j=N\end{array}\right.,
\end{align*}
represents a discretization of the gradient in horizontal and vertical direction with $\|L\| \leq 8$ and $\|\cdot\|_{\times}:\Y \rightarrow \R$, $\|(p,q)\|_{\times}=\sum_{i=1}^M \sum_{j=1}^N \sqrt{p_{i,j}^2+q_{i,j}^2}$, is a norm on the Hilbert space $\Y$.

Consequently, the optimization problem \eqref{probimageproc} can be equivalently written as
\begin{align}\label{subsec2-p2}
	\inf_{x \in \R^n}{\left\{f(x) + g_1(Ax) + g_2(Wx) + g_3(Lx) \right\}},
\end{align}
where $f:\R^n \rightarrow \oR$, $f(x)=\delta_{\left[0,1\right]^n}(x)$, $g_1:\R^n \rightarrow \R$, $g_1(y)=\|y-b\|_1$, $g_2:\R^n \rightarrow \R$, $g_2(y)=\alpha_2 \|y\|_1$ and $g_3:\Y \rightarrow \R$, $g_3(y,z) = \alpha_1 \|(y,z)\|_{\times}$. For every $p\in\R^n$ we have $g_1^*(p) = \delta_{\left[-1,1\right]^n}(p) + p^Tb$ and $g_2^*(p)=\delta_{\left[-\alpha_2,\alpha_2\right]^n}(p)$ (see, for instance \cite{Bot10}), while, for every $(p,q)\in\Y$ it holds $g_3^*(p,q) = \delta_S(p,q)$, where (cf. \cite{BotHend12})
$$ S=\left\{ (p,q)\in\Y : \max_{\substack{1\leq i \leq M\\ 1\leq j \leq N }} \sqrt{p_{i,j}^2+ q_{i,j}^2} \leq \alpha_1 \right\}. $$
It is easy to see that for all $x,p,q \in \R^n$ it holds
{\allowdisplaybreaks
\begin{align*}
	\Prox\nolimits_{\tau f}(x) &= \argmin_{z\in\left[0,1\right]^n} \frac{1}{2} \|z - x\|^2 = \proj_{\left[0,1\right]^n}(x)\\
	\Prox\nolimits_{\sigma_1 g_1^*}(p) &= \argmin_{z\in\left[-1,1\right]^n} \left\{\sigma_1z^Tb + \frac{1}{2} \|z-p\|^2\right\} = \proj_{\left[-1,1\right]^n}(p-\sigma_1b) \\
	\Prox\nolimits_{\sigma_2 g_2^*}(p) &= \argmin_{z\in \left[-\alpha_2,\alpha_2\right]^n} \frac{1}{2} \|z-p\|^2 = \proj_{\left[-\alpha_2,\alpha_2\right]^n}(p)\\
	\Prox\nolimits_{\sigma_3 g_3^*}(p,q) &= \argmin_{(z_1,z_2) \in S} \frac{1}{2} \|(z_1,z_2) - (p,q)\|^2 = \proj_S \left(p,q\right),
\end{align*}}
where the projection operator $\proj_S : \Y \rightarrow S$ is defined via
\begin{align*}
		(p_{i,j},q_{i,j}) \mapsto \alpha_1 \frac{(p_{i,j},q_{i,j})}{\max\left\{\alpha_1,\sqrt{p_{i,j}^2 + q_{i,j}^2}\right\}}, \ 1\leq i \leq M,\ 1\leq j \leq N.
\end{align*}
Hence, when choosing $x_0\in\R^n$ and $(v_{1,0},v_{2,0},v_{3,0})\in \R^n \times \R^n \times \Y$ as starting values, and letting $(\lambda_n)_{n\geq 0} \subseteq (0,2)$ and $\tau,\sigma_1,\sigma_2,\sigma_3 \in \R_{++}$ be such that $\tau\left( \sigma_1+ \sigma_2 2^{-16} + 8\sigma_3 \right) < 4$, the iterative scheme in Algorithm \ref{alg3} becomes
	\begin{align*}
	  \left(\forall n\geq 0\right) \begin{array}{l}  \left\lfloor \begin{array}{l}
		p_{1,n} = \proj_{\left[0,1\right]^n}\left(x_n -\frac{\tau}{2}\left(A^*v_{1,n} + W^*v_{2,n} + L^*v_{3,n}\right)\right) \\
		p_{2,1,n} = \proj_{\left[-1,1\right]^n}\left(v_{1,n}+\frac{\sigma_1}{2}A(2p_{1,n}-x_n) - \sigma_1 b\right) \\
		p_{2,2,n} = \proj_{\left[-\alpha_2,\alpha_2\right]^n}\left(v_{2,n} + \frac{\sigma_2}{2}W(2p_{1,n}-x_n) \right) \\
		p_{2,3,n} = \proj_S\left(v_{3,n} + \frac{\sigma_3}{2}L^*(2p_{1,n}-x_n)\right) \\
		z_{1,n} = 2p_{1,n} -x_n - \frac{\tau}{2}\left(A^*(2p_{2,1,n}-v_{1,n}) + W^*(2p_{2,2,n}-v_{2,n})\right.\\
		\hspace{9.5em}\left.+ L^*(2p_{2,3,n}-v_{3,n})\right) \\
		x_{n+1} = x_n + \lambda_n (z_{1,n}-p_{1,n}) \\
		v_{1,n+1} = v_{1,n} + \lambda_n\left(p_{2,1,n} - v_{1,n} + \frac{\sigma_1}{2}A(2z_{1,n}-2p_{1,n}+x_n)\right) \\
		v_{2,n+1} = v_{2,n} + \lambda_n\left(p_{2,2,n} - v_{2,n} + \frac{\sigma_2}{2}W(2z_{1,n}-2p_{1,n}+x_n)\right) \\
		v_{3,n+1} = v_{3,n} + \lambda_n\left(p_{2,3,n} - v_{3,n} + \frac{\sigma_3}{2}L(2z_{1,n}-2p_{1,n}+x_n)\right).
			\end{array} \right.
		\end{array}
	\end{align*}
Similarly, taking also into account Remark \ref{remark-alg4}, Algorithm \ref{alg4} can be also implemented to this problem, this time by choosing $\tau,\sigma_1,\sigma_2,\sigma_3 \in \R_{++}$ such that $\tau\left( \sigma_1+ \sigma_2 2^{-16} + 8\sigma_3 \right) < 1$.

Figure \ref{fig:cameramen-fval-ISNR} shows the performance of  Algorithm \ref{alg3} and Algorithm \ref{alg4} when solving \eqref{subsec2-p2} for $\alpha_1=3$e-$3$, $\alpha_2=2$e-$5$, starting points $x_0=b$ and $(v_{1,0},v_{2,0},v_{3,0})=(0,0,0)$ and parameters
\begin{itemize}
	\setlength{\itemsep}{-2pt}
	\small
	\item DR1 (Algorithm \ref{alg3}): $\sigma_1=1$, $\sigma_2=1$, $\sigma_3=0.05$, $\tau=4\left(\sigma_1+ \sigma_2 2^{-16} + 8\sigma_3\right)^{-1}-0.01$, $\lambda_n=1.5$ for every $n\geq 0$;
	\item DR2 (Algorithm \ref{alg4}): $\sigma_1=1$, $\sigma_2=0.05$, $\sigma_3=0.05$, $\tau=1\left(\sigma_1+ \sigma_2 2^{-16} + 8\sigma_3\right)^{-1}-0.01$, $\lambda_n=1.6$ for every $n\geq 0$,
\end{itemize}
and of the iterative scheme designed in \cite[Theorem 3.1]{ComPes12} for
\begin{itemize}
	\setlength{\itemsep}{-2pt}
	\small
	\item FBF (\cite[Theorem 3.1]{ComPes12}): $\varepsilon=\frac{1}{20(\sqrt{1+2^{-16}+8}+1)}$, $\gamma_n=\frac{1-\varepsilon}{\sqrt{1+2^{-16}+8}}$ for every $n\geq 0$,
\end{itemize}
within the first $200$ iterations, when applied to the $256\times 256$ cameraman test image. Figure \ref{fig:cameraman} shows the original, observed and reconstructed versions of the $256\times 256$ cameraman test image.

\begin{figure}[htb]	
	\centering
	\includegraphics*[width=0.48\textwidth]{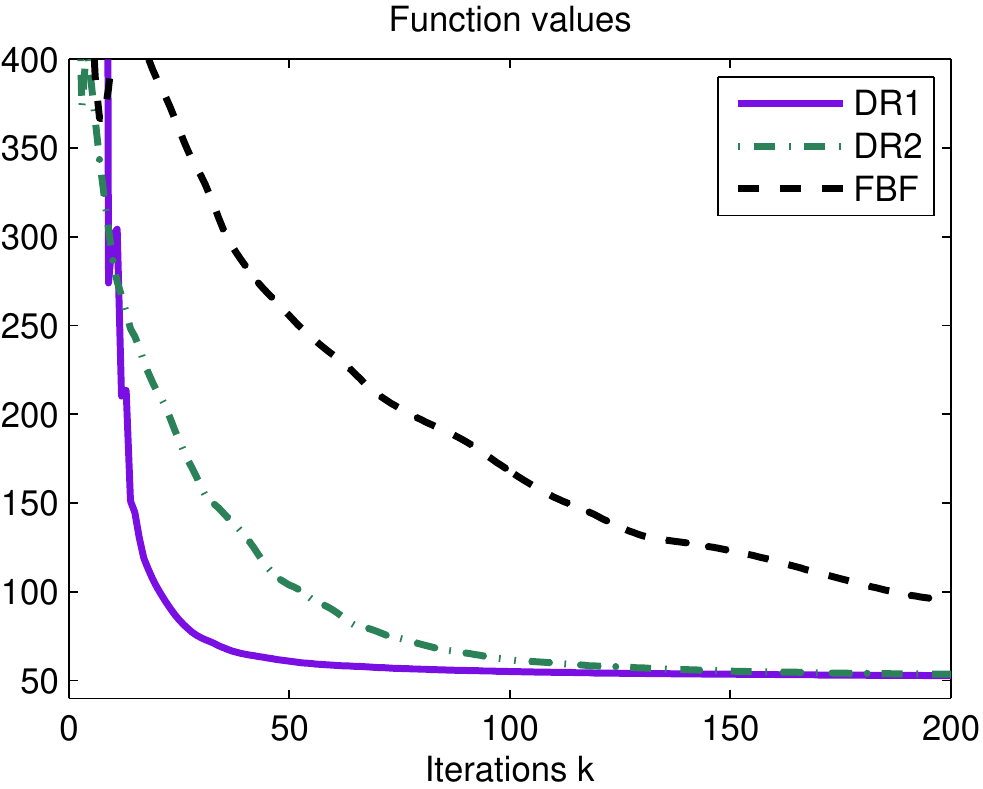}\hspace{2mm}
	\includegraphics*[width=0.48\textwidth]{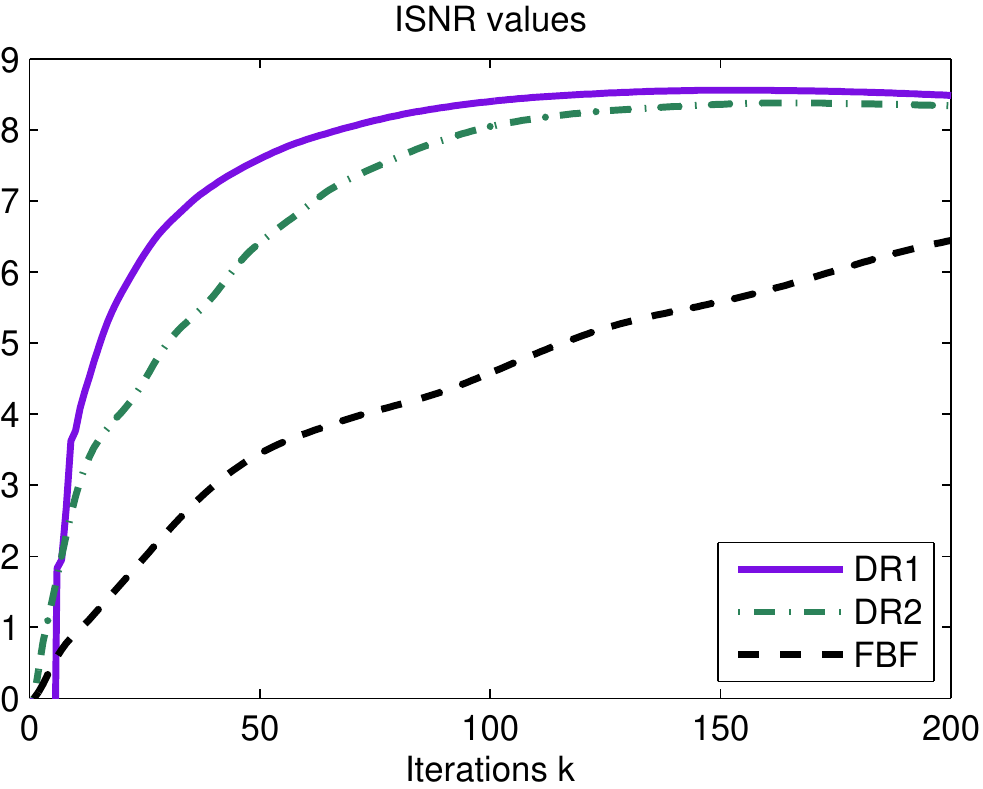}
	\caption{\small The evolution of the values of the objective function and of the ISNR (improvement in signal-to-noise ratio) for Algorithm \ref{alg3} (DR1), Algorithm \ref{alg4} (DR2)  and the forward-backward-forward method (FBF) from \cite[Theorem 3.1]{ComPes12}.}
	\label{fig:cameramen-fval-ISNR}	
\end{figure}
\begin{figure}[htb]	
	\centering
	\captionsetup[subfigure]{position=top}
	\subfloat[Original image]{\includegraphics*[viewport= 144 250 467 574, width=0.32\textwidth]{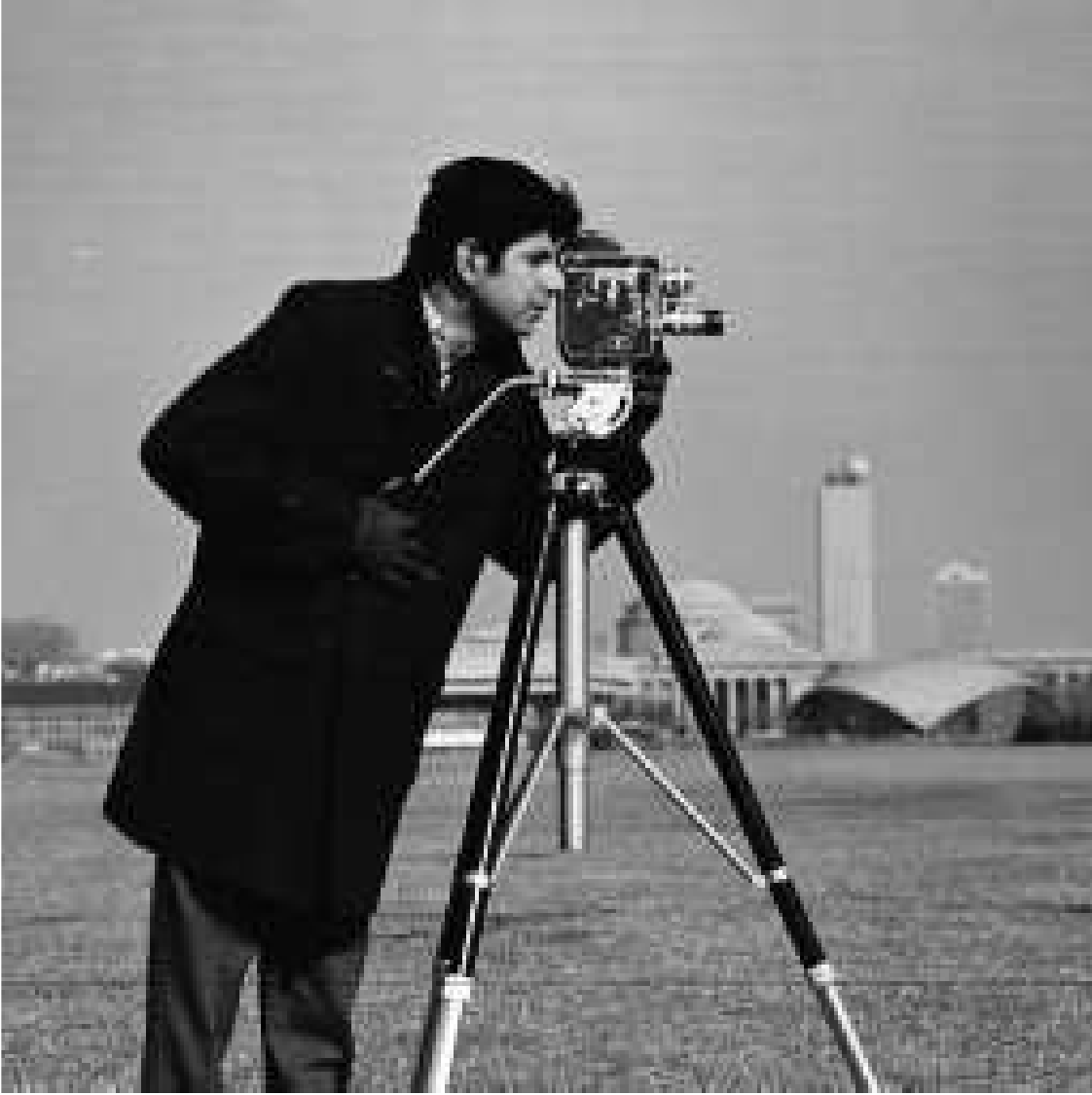}} \hspace{0.2mm}
	\subfloat[Blurred and noisy image]{\includegraphics*[viewport= 144 250 467 574, width=0.32\textwidth]{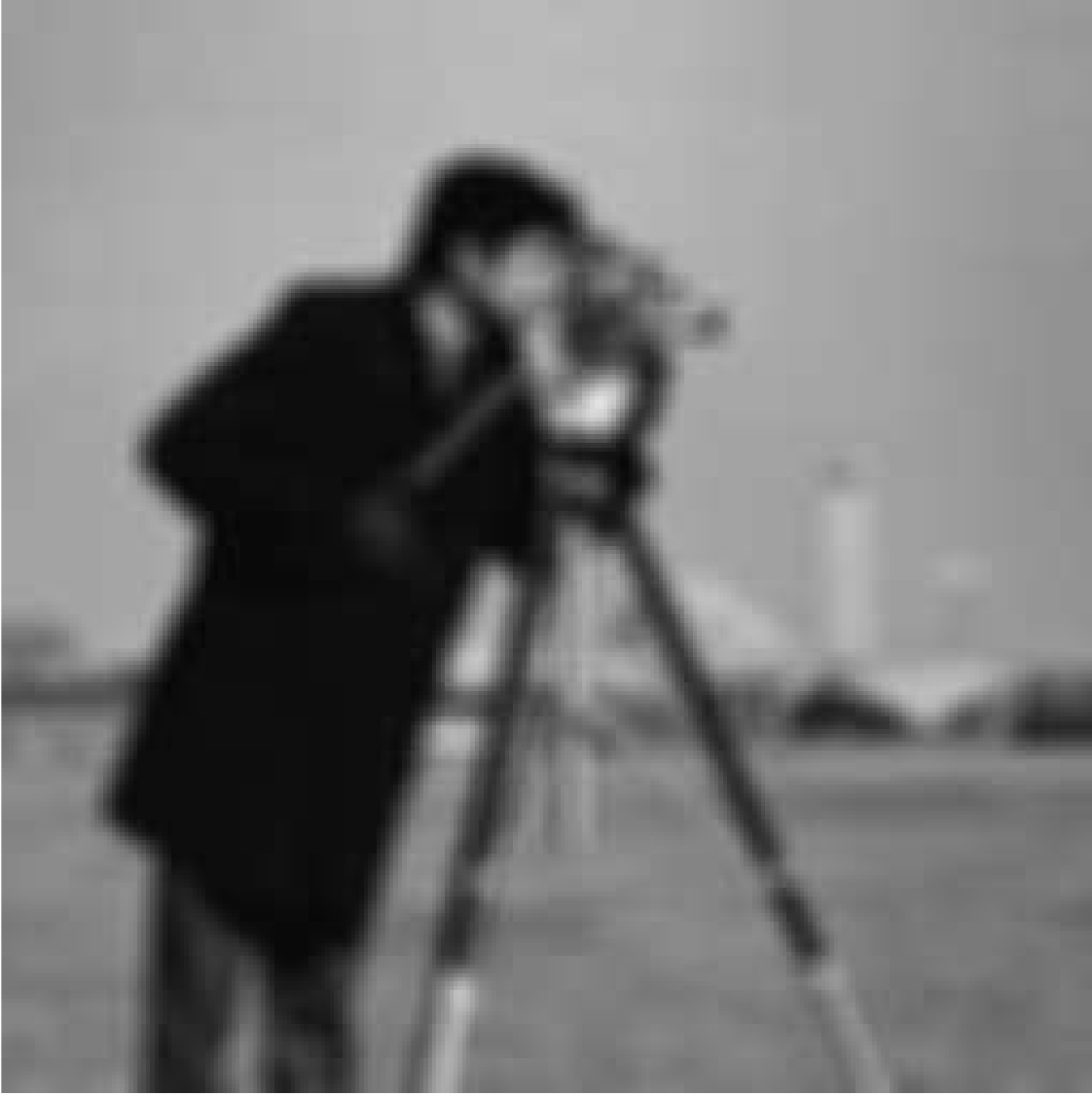}}  \hspace{0.2mm}
	\subfloat[Reconstructed image]{\includegraphics*[viewport= 144 250 467 574, width=0.32\textwidth]{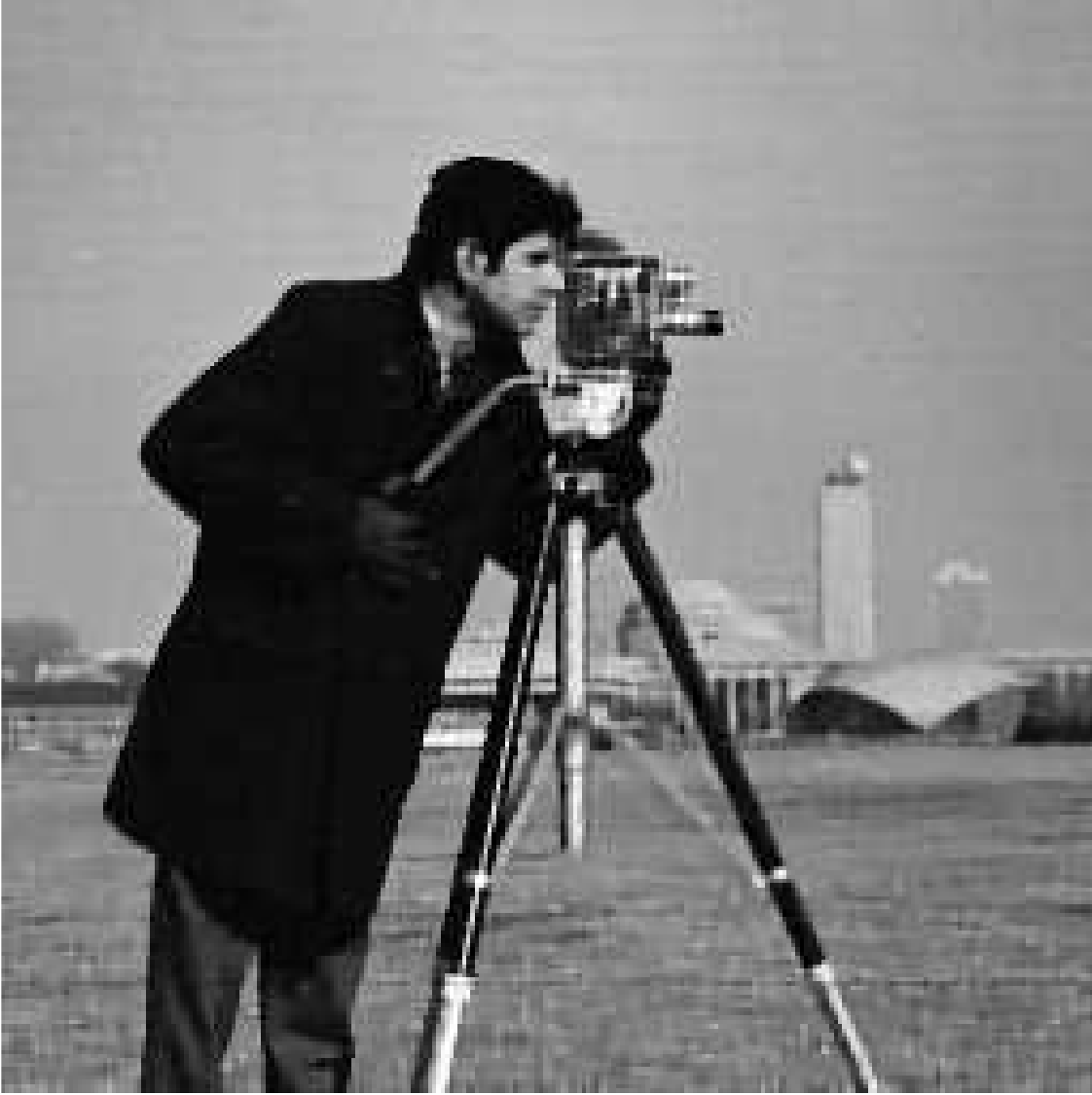}}	
	\caption{\small Figure (a) shows the clean $256\times 256$ cameraman test image, (b) shows the image obtained after multiplying it with a blur operator and adding white Gaussian noise with standard deviation $10^{-3}$ and (c) shows the reconstructed image generated by Algorithm \ref{alg3}.}
	\label{fig:cameraman}	
\end{figure}	

\noindent{\bf Acknowledgements.} The authors are grateful to their colleague E.R. Csetnek for remarks which improved the quality of the paper.

\end{document}